\numberwithin{equation}{section}
\newcommand{\R}{{\mathbb R}}
\newcommand{\re}{{\mathbb R}}
\newcommand{\ren}{{\mathbb R}^N}
\newcommand{\be}[1]{\begin{equation}\label{#1}}
\newcommand{\ee}{\end{equation}}
\renewcommand{\)}{\right)}
\newcommand{\prf}{\par\smallskip\noindent{\sl Proof. \/}}
\newcommand{\finprf}{\unskip\null\hfill$\;\square$\vskip 0.3cm}
\newenvironment{proof}{\prf}{\finprf}
\newtheorem{theorem}{Theorem}[section]
\newtheorem{lemma}{Lemma}[section]
\newtheorem{corollary}[theorem]{Corollary}
\newtheorem{proposition}[theorem]{Proposition}
\newtheorem{remark}[theorem]{Remark}
\newtheorem{definition}{Definition}[section]
 \newcommand{\nc}{\normalcolor}
\def\qed{\,\unskip\kern 6pt \penalty 500
\raise -2pt\hbox{\vrule \vbox to8pt{\hrule width 6pt
\vfill\hrule}\vrule}\par}
\definecolor{darkblue}{rgb}{0.05, .05, .65}
\definecolor{darkgreen}{rgb}{0.1, .65, .1}
\definecolor{darkred}{rgb}{0.8,0,0}
\begin{document}
\title{\textbf{ Optimal estimates for Fractional \\ Fast diffusion equations }\\[7mm]}

\author{\Large  Juan Luis V\'azquez and Bruno Volzone} 
\date{} 

\maketitle

\begin{abstract}
We obtain a priori estimates with best constants for the solutions of the fractional fast diffusion equation $u_t+(-\Delta)^{\sigma/2}u^m=0$,  posed in the
whole space with $0<\sigma<2$, $0<m\le 1$. The estimates are expressed in terms of convenient norms of the initial data, the preferred norms being the $L^1$-norm and the Marcinkiewicz norm. The estimates contain exact exponents and best constants. We also obtain optimal estimates for the extinction time of the solutions in the range $m$ near 0 where solutions may vanish completely in finite time. Actually, our results apply to equations with a more general nonlinearity. Our main tools are symmetrization techniques and comparison of concentrations. Classical results for $\sigma=2$ are recovered in the limit.

\end{abstract}

\setcounter{page}{1}
\section{Introduction}\label{sec.intro}

Symmetrization is a very ancient geometrical idea that has become nowadays a popular tool of obtaining a priori estimates for the solutions of different partial
differential  equations,  notably  those of elliptic and parabolic type. Symmetrization techniques appear in classical works like \cite{MR0046395, PS1951}. The
application of  Schwarz  symmetrization to obtaining a priori estimates for elliptic problems is already described in
\cite{Wein62} and  \cite{Maz}.  The  standard  elliptic result refers to the solutions of an equation of the form
$$
Lu=f,  \qquad Lu=-\sum_{i,j} \partial_i(a_{ij}\partial_j u)\,,
$$
posed in a bounded domain $\Omega\subseteq \ren$;  the coefficients $\{a_{ij}\}$ are assumed to be bounded, measurable and satisfy the usual ellipticity
condition; finally, we take zero Dirichlet boundary conditions on the boundary $\partial\Omega$. The  classical analysis introduced by Talenti \cite{Talenti1,
Talenti3} leads to pointwise comparison between the symmetrized version (more precisely the spherical decreasing rearrangement)  of the actual solution of the
problem $u(x)$ and the radially symmetric solution $v(|x|)$ of some radially symmetric model problem which is posed in a ball with the same volume as $\Omega$.
Sharp a priori estimates for the solutions are then derived. Extensions of this method to more general problems or related equations have led to a
copious
literature.

For parabolic problems this pointwise comparison fails and the appropriate concept is comparison of concentrations, cf. Bandle \cite{Bandle, Band2} and Vazquez
\cite{Vsym82}. The latter considers the evolution problems of the form
\begin{equation}\label{evol.pbm}
\partial_t u=\Delta A(u), \quad u(0)=u_0,
 \end{equation}
where $A$ a monotone increasing real function and $u_0$ is a suitably given initial datum which is assumed to be integrable. For simplicity the problem was
posed
for $x\in \ren$,  but bounded open sets can be used as spatial domains.

Symmetrization techniques were first applied to PDEs involving fractional Laplacian operators in the paper \cite{BV}, where the linear elliptic case is studied.
In our previous paper  \cite{VazVol} we were able to improve on that progress and combine it with the  parabolic ideas of \cite{Vsym82} to establish the
relevant
comparison theorems based on symmetrization for linear and  nonlinear parabolic equations.  To be specific, we deal with equations of the form
\begin{equation}\label{nolin.parab}
\partial_t u +(-\Delta)^{\sigma/2}A(u)=f, \qquad 0<\sigma<2\,.
\end{equation}
Following the known theory for the standard Laplacian, the nonlinearity $A$ is an increasing real function such that $A(0)=0$, and we accept some extra regularity conditions as needed, like $A$ smooth with $A'(u)>0$ for all $u>0$. The problem is posed in the whole space $\ren$. Special attention is
paid to  cases of the form $A(u)=u^m$ with $m>0$; the equation is then called the Fractional Heat Equation (FHE) when $m=1$, the Fractional Porous Medium Equation (FPME)  if $m>1$, and the Fractional Fast Diffusion Equation (FFDE) if $m<1$. Let us recall that the linear equation \ $\partial_t u
+(-\Delta)^{\sigma/2}u=0$  is a model of so-called anomalous diffusion, a much studied topic in
physics.  The interest in these operators has a long history in Probability since the fractional Laplacian operators of the form $(-\Delta)^{\sigma/2}$  are infinitesimal generators of stable L\'{e}vy processes, see \cite{Applebaum, Bertoin, Valdinoc}. Further motivation and references on the literature are given in \cite{BV2012, VazVol}.

\medskip

\noindent {\sc Main results. } In the present paper we  use the parabolic comparison results of \cite{VazVol} to obtain precise a priori estimates for the
solutions of equation \eqref{nolin.parab}. One of these estimates is the so-called $L^1$ into $L^\infty$ smoothing effect. See the precise result in Theorem \ref{thm.opt}, where we give the precise exponents and best constant $C$ in the decay inequality
\begin{equation}
 \|u(\cdot,t)\|_\infty\le C\,\|u_0\|_1^{\sigma\beta}\,t^{-\alpha}.
\end{equation}
The calculation of best constants in functional inequalities is a topic of continuing interest in the theory of PDEs, both in the elliptic and evolution settings. Classical
references to the calculation of best constants by symmetrization methods are Aubin and Talenti's computation of the best constants in the Sobolev inequality in \cite{Aubin76, Talenti2} and Lieb's HLS inequalities \cite{Lieb83}. Our calculation of a priori estimates with exact exponents and best
constants is closely related to  the sharp decay estimate for solutions of the porous medium/fast diffusion equation in
\cite{Vsym82,JLVSmoothing}. When treating the linear case $A(u)=u$, the estimates  are called
ultra-contractivity, see the book \cite{Davies1} where the importance of best constants is stressed for the applications in Physics, notably in quantum field
theory. This reference also explains the relation between parabolic decay estimates and Nash-Sobolev inequalities.

As a further application of the comparison techniques, optimal estimates with initial data in Marcinkiewicz spaces are obtained in Theorems \ref{Marthe} and
\ref{Marcink}.

We also contribute an interesting extension of the comparison results of \cite{VazVol}, which allows to compare the solutions of two equations with different
nonlinearities $A$ and $\widetilde A$, on the condition that the latter must be a concave function. This leads to extended optimal estimates.

An important critical exponent appears repeatedly in the paper as a lower bound,
\begin{equation}
m_c:=(N-\sigma)/N\,.\label{supFFD}
\end{equation}
Since we are assuming $m>0$ and $0<\sigma<2$, it does not appear in dimension $N=1$ if $\sigma\ge 1$.
Thus, we study the question of deciding the possible extinction of solutions in the range $m<m_c$ in terms of some norm of the initial data, and estimating  the
extinction time. First of all, we construct an explicit extinction solution of the fractional fast diffusion equation in this range of $m$'s, formula
\eqref{solvanish}. Then, we obtain optimal estimates by using comparison based on symmetrization. In this direction we improve significantly the results  of  the previous papers \cite{BV2012} and \cite{pqrv2}, by obtaining optimal estimates on the extinction time for data in Marcinkiewicz spaces, cf. Theorem \ref{extinctiontimetheo}.

Our results in this paper are stable under the limit $\sigma\to 2$, where the standard diffusion case is recovered. See \cite{pqrv2} for details on such limit.

\section{A short recall of symmetrization and comparison results}

The basic ideas and notations on Schwartz symmetrization are well known. We take from \cite{VazVol} some of the concepts that we will use. Let
$\Omega$ be an open set of $\mathbb{R}^{N}$ or the whole space, and let $f$ be a real measurable function on $\Omega$. We will denote by
$\left\vert
\cdot\right\vert $ the $N$-dimensional Lebesgue measure. We define the distribution function $\mu_{f}$ of $f$ as%
\[
\mu_{f}\left(  k\right)  =\left\vert \left\{  x\in\Omega:\left\vert f\left(
x\right)  \right\vert >k\right\}  \right\vert \text{ , }k\geq0,
\]
and the \emph{decreasing rearrangement} of $f$ as%
\[
f^{\ast}\left(  s\right)  =\sup\left\{ k\geq0:\mu_{f}\left(  k\right)
>s\right\}  \text{ , }s\in\left(  0,\left\vert \Omega\right\vert \right).
\]
Furthermore, if $\omega_{N\text{ }}$ is the measure of the unit ball in $\mathbb{R}^{N}$ and $\Omega^{\#}$ is the ball of $\mathbb{R}^{N}$
centered at
the origin having the same Lebesgue measure as $\Omega,$ we define the
function
\[
f^{\#}\left(  x\right)  =f^{\ast}(\omega_{N}\left\vert x\right\vert
^{N})\text{ \ , }x\in\Omega^{\#},
\]
that will be called the spherical decreasing rearrangement of $f$. From this definition it follows that $f$ is rearranged if and only if
$f=f^{\#}$.

\subsection{Mass concentration}

We will provide estimates of the solutions of our parabolic problems in terms of their integrals. For that purpose, the following definition is
remarkably useful.

\begin{definition}
Let $f,g\in L^{1}_{loc}(\R^{N})$ be two radially symmetric functions on $\R^{N}$. We say that $f$ is less concentrated than $g$, and we write
$f\prec
g$ if for
all $R>0$ we get
\[
\int_{B_{R}(0)}f(x)dx\leq \int_{B_{R}(0)}g(x)dx.
\]
\end{definition}
The partial order relationship $\prec$ is called \emph{comparison of mass concentrations}.
Of course, this definition can be suitably adapted if $f,g$ are radially symmetric and locally integrable functions on a ball $B_{R}$. Besides,
if
$f$
and $g$ are locally integrable on a general open set $\Omega$, we say that $f$ is less concentrated than $g$ and we write again $f\prec g$
simply if
$f^{\#}\prec g^{\#}$,  but this extended definition has no use if $g$ is not rearranged.

The comparison of mass concentrations enjoys a nice equivalent formulation if $f$ and $g$ are rearranged:

\begin{lemma}\label{lemma1}
Let $f,g\in L^{1}(\Omega)$ be two rearranged functions on a ball $\Omega=B_{R}(0)$. Then $f\prec g$ if and only if for every convex
nondecreasing
function
$\Phi:[0,\infty)\rightarrow [0,\infty)$ with $\Phi(0)=0$ we have
\begin{equation}
\int_{\Omega}\Phi(f(x))\,dx\leq \int_{\Omega}\Phi(g(x))\,dx.
\end{equation}
This result still holds if $R=\infty$ and $f,g\in L^{1}_{loc}(\R^{N})$ with $g\rightarrow0$ as $|x|\rightarrow\infty$.
\end{lemma}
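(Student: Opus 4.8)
The plan is to treat this as a one-dimensional rearrangement statement. Since $f=f^{\#}$ and $g=g^{\#}$ are radially decreasing, for each $r>0$ one has $\int_{B_{r}(0)}f\,dx=\int_{0}^{\omega_{N}r^{N}}f^{\ast}(\tau)\,d\tau$, and likewise for $g$; writing
\[
F(s):=\int_{0}^{s}f^{\ast}(\tau)\,d\tau,\qquad G(s):=\int_{0}^{s}g^{\ast}(\tau)\,d\tau,\qquad s\in[0,|\Omega|),
\]
the relation $f\prec g$ is therefore \emph{equivalent} to $F\le G$ on $(0,|\Omega|)$. Both $F$ and $G$ are nondecreasing, concave (because $f^{\ast},g^{\ast}$ are nonincreasing) and vanish at $s=0$. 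On the other hand, since $\Phi$ is nondecreasing, $\Phi(f)$ is equimeasurable with $\Phi(f^{\ast})$, so $\int_{\Omega}\Phi(f)\,dx=\int_{0}^{|\Omega|}\Phi(f^{\ast}(s))\,ds$ and similarly for $g$. Thus the lemma reduces to: $F\le G$ on $(0,|\Omega|)$ if and only if $\int_{0}^{|\Omega|}\Phi(f^{\ast})\,ds\le\int_{0}^{|\Omega|}\Phi(g^{\ast})\,ds$ for all admissible $\Phi$.

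The engine of the proof is the following truncation identity, which I would isolate first: for any rearranged $h$ and any $k\ge0$,
\[
\int_{\Omega}(h-k)_{+}\,dx=\sup_{0\le s\le|\Omega|}\bigl(H(s)-ks\bigr),\qquad H(s):=\int_{0}^{s}h^{\ast}(\tau)\,d\tau .
\]
Indeed $\int_{\Omega}(h-k)_{+}\,dx=\int_{0}^{|\Omega|}(h^{\ast}(s)-k)_{+}\,ds=\int_{0}^{\mu_{h}(k)}(h^{\ast}(s)-k)\,ds=H(\mu_{h}(k))-k\,\mu_{h}(k)$, where $\mu_{h}(k)=|\{h>k\}|$; and since $h^{\ast}\ge k$ on $(0,\mu_{h}(k))$ and $h^{\ast}\le k$ afterwards, the concave function $s\mapsto H(s)-ks$ attains its maximum exactly at $s=\mu_{h}(k)$. (When $|\Omega|=\infty$ the hypotheses $g\to0$ and $f\prec g$ force $g^{\ast}(\infty)=f^{\ast}(\infty)=0$, so for $k>0$ these suprema are finite.)

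For the \emph{forward} implication, let $\Phi$ be convex and nondecreasing with $\Phi(0)=0$ and let $\varphi=\Phi'$ be its (nonnegative, nondecreasing) right derivative; integrating by parts yields the decomposition $\Phi(v)=\varphi(0^{+})\,v+\int_{(0,\infty)}(v-k)_{+}\,d\varphi(k)$ for $v\ge0$. Substituting $v=f(x)$, integrating over $\Omega$ and using Tonelli (all integrands nonnegative),
\[
\int_{\Omega}\Phi(f)\,dx=\varphi(0^{+})\int_{\Omega}f\,dx+\int_{(0,\infty)}\Bigl(\int_{\Omega}(f-k)_{+}\,dx\Bigr)\,d\varphi(k),
\]
and the same for $g$. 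Since $\int_{\Omega}f=\lim_{s\uparrow|\Omega|}F(s)\le\lim_{s\uparrow|\Omega|}G(s)=\int_{\Omega}g$ and, by the truncation identity together with $F\le G$, one has $\int_{\Omega}(f-k)_{+}\,dx=\sup_{s}(F(s)-ks)\le\sup_{s}(G(s)-ks)=\int_{\Omega}(g-k)_{+}\,dx$ for every $k$, adding these up gives $\int_{\Omega}\Phi(f)\le\int_{\Omega}\Phi(g)$.

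For the \emph{reverse} implication I would argue by contradiction: if $F(s_{0})>G(s_{0})$ for some $s_{0}\in(0,|\Omega|)$, pick a supporting line of the concave nondecreasing function $G$ at $s_{0}$, necessarily of slope $k_{0}\ge0$, so $G(s)\le G(s_{0})+k_{0}(s-s_{0})$ for all $s$, with equality at $s_{0}$. Testing the hypothesis with the admissible choice $\Phi(v)=(v-k_{0})_{+}$ and invoking the truncation identity,
\[
\int_{\Omega}(g-k_{0})_{+}\,dx=\sup_{s}\bigl(G(s)-k_{0}s\bigr)=G(s_{0})-k_{0}s_{0}<F(s_{0})-k_{0}s_{0}\le\sup_{s}\bigl(F(s)-k_{0}s\bigr)=\int_{\Omega}(f-k_{0})_{+}\,dx,
\]
contradicting $\int_{\Omega}(f-k_{0})_{+}\le\int_{\Omega}(g-k_{0})_{+}$; hence $F\le G$, i.e. $f\prec g$. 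The routine parts are the Choquet-type decomposition of $\Phi$ into the elementary pieces $v$ and $(v-k)_{+}$ and the Fubini step; the delicate point is precisely the reverse direction, where one must recover the pointwise inequality between the primitives $F$ and $G$ from the whole family of integral inequalities — and that is exactly where the concavity of $F$ and $G$, through the existence of supporting lines, is used in an essential way. The case $R=\infty$ requires only the extra bookkeeping indicated above: one uses $g\to0$ to guarantee that the conjugate quantities $\sup_{s}(H(s)-ks)$, $k>0$, are finite and that the monotone limits defining $\int_{\Omega}f$ and $\int_{\Omega}g$ make sense in $[0,\infty]$.
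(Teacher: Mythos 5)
The paper does not actually prove this lemma: it is recalled without proof (it is a classical equivalence, quoted from the symmetrization literature and from the authors' earlier paper), so there is no in-paper argument to compare yours against. Your proof is correct and is the standard Hardy--Littlewood--P\'olya/Crandall--Tartar argument: reduce to the primitives $F\le G$, represent $\Phi$ as $\varphi(0^{+})v+\int(v-k)_{+}\,d\varphi(k)$, use the Legendre-type truncation identity $\int_{\Omega}(h-k)_{+}=\sup_{s}(H(s)-ks)$ for the forward direction, and supporting lines of the concave function $G$ for the converse. One cosmetic remark: in the converse step on $\R^{N}$ you assert the equality $\sup_{s}(F(s)-k_{0}s)=\int_{\Omega}(f-k_{0})_{+}$, which in degenerate cases (e.g.\ $\mu_{f}(k_{0})=\infty$) is only guaranteed as the inequality $\sup_{s}(F(s)-k_{0}s)\le\int_{\Omega}(f-k_{0})_{+}$; since that is the direction your chain of inequalities actually uses, the argument is unaffected.
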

From this Lemma it easily follows that if $f\prec g$ and $f,g$ are rearranged, then
\begin{equation}
\|f\|_{p}\leq \|g\|_{p}\quad \forall p\in[1,\infty].
\end{equation}

\subsection{Lorentz spaces}
We review here some basic notions and properties of Lorentz spaces, where the initial data of the parabolic problems will be chosen in some of the
situations we
study in this paper.
Let $\Omega$ be an open subset of $\mathbb{R}^{N}$. As in \cite{JLVSmoothing} for instance, for $p>1$ we define the Marcinkiewicz space (or $L^{p}$-weak space)
$\mathcal{M}_{p}(\Omega)$ as the space of all functions $f\in L^{1}_{loc}(\Omega)$ for which there is a constant $C$ such that
\begin{equation}
\int_{K}|f|dx\leq C|K|^{(p-1)/p}\label{Marcknm1}
\end{equation}
for every subset $K$ of $\Omega$ having finite measure. The smallest $C$ in \eqref{Marcknm1} defines a norm $\|f\|_{\mathcal{M}_{p}}$. It is readily seen
that
if
$f\in L^{p}(\Omega)$, then $f\in \mathcal{M}_{p}(\Omega)$ and
\[
\|f\|_{\mathcal{M}_{p}}\leq\|f\|_{p}.
\]
By this definition, it follows that the function
\[
U_{p}(x)=\frac{C}{|x|^{N/p}}
\]
belongs to $\mathcal{M}_{p}(\Omega)$ but (in general) not to $L^{p}(\Omega)$, and it is more concentrated than any function $f\in L^{p}(\Omega)$ having
$L^p$
norm equal or less than $\|U_{p}\|_{\mathcal{M}_{p}}$.

Actually, one can prove that $f\in \mathcal{M}_{p}(\Omega)$ if and only if there is a constant $C>0$ such that
\begin{equation}
\mu_{f}(t)\leq \left(\frac{C}{t}\right)^{p}\label{Marcknm2}
\end{equation}
for all $t>0$. The smallest $C$ in \eqref{Marcknm2} is shown to be a norm, equivalent to $\|f\|_{\mathcal{M}_{p}}$. \smallskip\\
Now we switch to define the Lorentz spaces. We say
that a real measurable function $f$ on $\Omega$ belongs to the
Lorentz space $L^{p,q}\left(  \Omega\right)$, for $1\leq p,q\leq+\infty$ if the
quantity
\begin{equation}
||f||_{L^{p,q}(\Omega)}=\left\{
\begin{array}
[c]{ll}%
\left(
{\displaystyle\int_{0}^{|\Omega|}}
\left[  s^{\frac{1}{p}}f^{\ast}(s)\right]  ^{q}\frac{ds}{s}\right)  ^{\frac
{1}{q}} & 0<q<\infty\\
\underset{s\in(0,|\Omega|)}{\sup}s^{\frac{1}{p}}\,f^{\ast}(s) & q=\infty
\end{array}
\right.  \label{def lor-zig}%
\end{equation}
is finite. We remark that for $p>1$, and $q\geq1$, the quantity in
(\ref{def lor-zig}) is a seminorm, but it can be equivalently defined replacing $f^{\ast}\left(
t\right)  $ with $$f^{\ast\ast}\left(  s\right)  =\frac{1}{s}\int_{0}%
^{s}f^{\ast}(\sigma)\, d\sigma,$$
which provides a true norm. In addition, we point out that the $L^{p,q}$-norm, for every $1<p,q\leq+\infty,$ is rearrangement invariant, that is
\[
\left\Vert f\right\Vert _{L^{p,q}\left(  \Omega\right)  }=\Vert f^{\#}%
\Vert_{L^{p,q}\left(  \Omega^{\#}\right)  }.
\]
Besides, we emphasize that $L^{p,p}\left(  \Omega\right)  =L^{p}(\Omega)$,
$L^{p,\infty}\left(  \Omega\right)  =\mathcal{M}_{p}(\Omega)$ and
\begin{equation}
L^{p,q}\left(  \Omega\right)\hookrightarrow \mathcal{M}_{p}(\Omega)\label{embed}
\end{equation}
for all $p,q\geq1$.
For more properties of Lorentz spaces we address the interested reader to \cite{Bennett-Sharpley}.
\nc

\subsection{The basic comparison result}
Now we briefly introduce the result in \cite{VazVol} we are going to rely on, which is the concentration comparison result for solutions to the Cauchy
problem for the nonlinear parabolic equation
\begin{equation} \label{eq.1}
\left\{
\begin{array}
[c]{lll}%
u_t+(-\Delta)^{\sigma/2}A(u)=f  &  & x\in\R^{N}\,,t>0%
\\[6pt]
u(x,0)=u_{0}(x) &  & x\in\R^{N}.
\end{array}
\right. %
\end{equation}
where when the nonlinearity $A(u)$ is a nonnegative function, smooth on $\R_{+}$,
with $A(0)=0$ and $A'(u)>0$ for all
$u>0$ (extended anti-symmetrically in the general two-signed theory). In \cite{VazVol} we have obtained that a concentration comparison for solutions
to \eqref{eq.1} holds \emph{only} when the nonlinearity $A$ is a \emph{concave} function, while for \emph{convex} $A$ a remarkable example is
constructed, showing that a failure of concentration comparison occurs (see \cite{VazVol}).

This is the precise statement of the result that will be frequently used in the next sections.

\begin{theorem}\label{Main comparison}
Let $u$ be the nonnegative mild solution to problem \eqref{eq.1} with $0<\sigma<2$, with
initial data $u_0\in L^1(\ren)$, $u_0\ge 0$, right-hand side $f\in L^1(Q)$ where $Q=\R^{N}\times (0,\infty)$, $f\ge 0$, and nonlinearity $A(u)$ given by a
concave function with
$A(0)=0$
and
$A'(u)>0$ for all $u>0$. Let $v$ be the solution of the symmetrized problem
\begin{equation} \label{eqcauchysymm.f}
\left\{
\begin{array}
[c]{lll}%
v_t+(-\Delta)^{\sigma/2}A(v)=f^{\#}(|x|,t)  &  & x\in\R^{N}\,, \ t>0,%
\\[6pt]
v(x,0)=u_{0}^{\#}(x) &  & x\in\R^{N},
\end{array}
\right. %
\end{equation}
where $f^{\#}(|x|,t)$ means the spherical rearrangement of $f(x,t)$ w.r. to $x$ for fixed time $t>0$. Then,
for all $t>0$ we have
\begin{equation}
u^\#(|x|,t)\prec v(|x|,t).\label{conccompa}
\end{equation}
In particular, we have $\|u(\cdot,t)\|_p \le\|v(\cdot,t)\|_p$ for every $t>0$ and every $p\in [1,\infty]$.
\end{theorem}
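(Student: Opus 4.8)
The plan is to reduce the theorem, via an approximation and time-discretization scheme, to an elliptic comparison step for a single implicit time step, since the fractional heat semigroup is not order-preserving on mass concentrations in any elementary way but the elliptic resolvent equation is more tractable. First I would regularize: approximate $u_0$ and $f$ by bounded, compactly supported, smooth data, approximate the concave nonlinearity $A$ by smooth strictly increasing concave $A_\varepsilon$ bounded away from degeneracy (e.g. $A_\varepsilon'(u)\ge\varepsilon$), and recall from \cite{VazVol} that mild solutions depend continuously on the data and on $A$ in the relevant topology, so that $\prec$ passes to the limit because it is preserved under $L^1_{loc}$ convergence of radially symmetric functions. Thus it suffices to prove \eqref{conccompa} in the smooth non-degenerate setting, where solutions are classical.

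Next I would set up the implicit Euler discretization in time with step $h>0$. On each interval, the semi-discrete problem reads $u^{k+1}-u^k+h(-\Delta)^{\sigma/2}A(u^{k+1})=h f^{k+1}$, and similarly $v^{k+1}-v^k+h(-\Delta)^{\sigma/2}A(v^{k+1})=h (f^{k+1})^{\#}$ with $v^0=u_0^{\#}$, $u^0=u_0$. The piecewise-constant interpolants converge to the mild solutions $u,v$ as $h\to0$ (this is standard nonlinear semigroup theory, available since $A$ is monotone). Because $\prec$ is again stable under this limit, the theorem follows once one shows the discrete comparison $(u^{k+1})^{\#}\prec v^{k+1}$ assuming inductively $(u^k)^{\#}\prec v^k$. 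The base case $k=0$ is an equality. This is exactly the point at which the concavity of $A$ and the fractional-Laplacian machinery of \cite{VazVol} enter.

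For the inductive step I would invoke the elliptic concentration comparison that underlies Theorem \ref{Main comparison}: writing the time step as a resolvent equation $(I+h(-\Delta)^{\sigma/2}A)\,u^{k+1}= u^k+hf^{k+1}$, one compares it with the symmetrized resolvent equation having right-hand side $(u^k+hf^{k+1})^{\#}$, and then uses that by the inductive hypothesis and the subadditivity of $\#$ with respect to $\prec$ one has $(u^k+hf^{k+1})^{\#}\prec v^k+h(f^{k+1})^{\#}$; monotone dependence of the (symmetrized) resolvent on its data in the $\prec$ order then yields $(u^{k+1})^{\#}\prec v^{k+1}$. The elliptic comparison itself — that the spherical rearrangement of the solution of $w+h(-\Delta)^{\sigma/2}A(w)=g$ is $\prec$ the solution of the radial problem with datum $g^{\#}$ — is where concavity of $A$ is essential and where one uses the Talenti/P\'olya–Szeg\H o-type inequality for $(-\Delta)^{\sigma/2}$ together with the Caffarelli–Silvestre extension, exactly as developed in \cite{VazVol}. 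I expect this elliptic step to be the main obstacle, both because it requires the delicate rearrangement inequalities for the nonlocal operator (the level-set/co-area argument does not localize as in the classical case) and because the concavity hypothesis on $A$ must be used precisely here to control the nonlinear term after rearrangement; everything else (regularization, discretization, passage to the limit, and the final deduction $\|u(\cdot,t)\|_p\le\|v(\cdot,t)\|_p$ from \eqref{conccompa} via Lemma \ref{lemma1}) is routine.
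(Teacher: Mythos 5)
Your proposal is correct and follows essentially the same route as the paper, which does not reprove this theorem but imports it from \cite{VazVol}: there (and in the generalization given in Section \ref{sec.comparison.nonlin} of this paper) the argument is exactly the Crandall--Liggett implicit time discretization, reducing the parabolic statement to a concentration comparison for the elliptic resolvent equation $(-\Delta)^{\sigma/2}z+h^{-1}B(z)=h^{-1}g$ with $B=A^{-1}$ convex, which is proved via the Caffarelli--Silvestre extension and symmetrization of the extended problem. Your identification of where concavity of $A$ (equivalently convexity of $B$) enters, and of the need for the subadditivity of rearrangement under $\prec$ and the monotone dependence of the radial resolvent on its data, matches the scheme of Theorems \ref{ellcompdiff} and \ref{thm.comp.diff.diff}.
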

Moreover, the following corollary justifies a reasonable consequence: if the data of problem \eqref{eq.1} are less concentrated that those of
the symmetrized problem, so are the corresponding solutions.
\begin{corollary}\label{corollarycomp}With the same assumptions of Theorem {\rm \ref{Main comparison}}, suppose that $u$ is the solution to problem
\eqref{eq.1} and
$v$ solves
\begin{equation}\label{eqcauchysymm.f1}
\left\{
\begin{array}
[c]{lll}%
v_t+(-\Delta)^{\sigma/2}A(v)=\widetilde{f}(|x|,t)  &  & x\in\R^{N}\,, \ t>0,%
\\[6pt]
v(x,0)=\widetilde{u}_{0}(x) &  & x\in\R^{N},
\end{array}
\right. %
\end{equation}
where $\widetilde{f}\in L^{1}(Q)$, $\widetilde{u}_{0} \in L^{1}(\R^{N})$ are nonnegative, radially symmetric decreasing functions with respect to $|x|$. If
\[
u_{0}^{\#}(|x|)\prec\widetilde{u}_{0}(|x|),\quad f^{\#}(|x|,t)\prec\widetilde{f}(|x|,t)
\]
for almost all $t>0$, then  the conclusion $u^\#(|x|,t)\prec v(|x|,t)$  still holds.
\end{corollary}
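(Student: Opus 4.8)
The plan is to deduce Corollary \ref{corollarycomp} directly from Theorem \ref{Main comparison} by inserting an intermediate symmetrized problem whose data are exactly $u_0^\#$ and $f^\#$, and then comparing \emph{that} intermediate solution with the solution $v$ driven by the more concentrated data $\widetilde u_0$, $\widetilde f$. Concretely, let $w$ be the solution of
\begin{equation}\label{eq.intermediate}
w_t+(-\Delta)^{\sigma/2}A(w)=f^{\#}(|x|,t),\qquad w(x,0)=u_0^{\#}(x),\quad x\in\R^N,\ t>0.
\end{equation}
By Theorem \ref{Main comparison} applied to $u$ we already have $u^\#(|x|,t)\prec w(|x|,t)$ for all $t>0$. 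Hence, by transitivity of $\prec$, it suffices to prove the comparison $w(|x|,t)\prec v(|x|,t)$, where $v$ solves \eqref{eqcauchysymm.f1}. Now both $w$ and $v$ are solutions of the \emph{same} equation with the \emph{same} concave nonlinearity $A$, posed on $\R^N$, and both have radially symmetric decreasing data; the only difference is that the data of $v$ dominate those of $w$ in the sense of mass concentration. So the whole matter reduces to an ordered-data comparison principle for the symmetrized equation alone.

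The second step — establishing $w\prec v$ — is where the real work lies, and I expect it to be the main obstacle, though it should be essentially a re-run of the machinery already developed in \cite{VazVol} to prove Theorem \ref{Main comparison}. The strategy is to revisit that proof: one typically works with the time-integrated, spatially-integrated quantities, i.e. one tests against the characteristic function of a ball (equivalently, one looks at the mass function $s\mapsto \int_{B_s} w\,dx$), uses the concavity of $A$ together with a Jensen-type inequality, and exploits a suitable representation or monotonicity property of the fractional Laplacian under rearrangement (the Riesz kernel being positive and radially decreasing). In \cite{VazVol} the input data were literally the rearrangements $u_0^\#$, $f^\#$ of the data of the non-symmetric problem; here the input is a pair of radially decreasing data with $u_0^\#\prec\widetilde u_0$ and $f^\#\prec\widetilde f$. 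The point is that the concentration comparison argument of \cite{VazVol} never really used that the data were rearrangements of something — it only used that they were radially decreasing and that one dominated the other in concentration, the rearrangement inequalities $u_0\prec u_0^\#$ and $f(\cdot,t)\prec f^\#(\cdot,t)$ being merely the device that produced the ordering. Therefore one can either (a) cite that the proof of Theorem \ref{Main comparison} in \cite{VazVol} in fact establishes the monotone dependence of the symmetrized solution on its radially decreasing data with respect to $\prec$, or (b) observe that the argument there goes verbatim once the two occurrences of ``$u_0^\#$'' and ``$f^\#$'' are replaced by the ordered pair of radial data, since the only inequality invoked is the ordering itself.

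A cleaner packaging, if one prefers not to reopen the proof, is to note that $\widetilde u_0$ and $\widetilde f$ are themselves rearranged (radially symmetric decreasing), so $\widetilde u_0=\widetilde u_0^{\#}$ and $\widetilde f=\widetilde f^{\#}$; thus $v$ is literally the symmetrized solution associated to the data $\widetilde u_0,\widetilde f$, and Theorem \ref{Main comparison} applied to the pair $(w,v)$ would give $w^\#\prec v$ provided we knew $w\prec \widetilde u_0^\#$ at $t=0$ and $f^\#\prec\widetilde f^\#$ for the forcing — which are exactly our hypotheses at the initial time. The subtlety is that $w$ at positive times is already rearranged (being a symmetrized solution), so $w^\#=w$, and the forcing hypothesis is assumed for a.e.\ $t$; so one really needs the version of the comparison that tracks the ordered data through all times, which is again the content of step two. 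In either presentation, the proof is: (i) interpose $w$ solving \eqref{eq.intermediate}; (ii) $u^\#\prec w$ by Theorem \ref{Main comparison}; (iii) $w\prec v$ by the ordered-data comparison for the symmetrized FFDE, which is the proof of \cite{VazVol} with the data ordering supplied by hypothesis rather than by Hardy--Littlewood; (iv) conclude $u^\#\prec v$ by transitivity of $\prec$, and the $L^p$ bounds follow from Lemma \ref{lemma1} as before.
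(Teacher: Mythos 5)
Your proposal is correct and follows the route the paper itself relies on: the paper states this corollary without proof, deferring to \cite{VazVol}, where the argument is exactly your step (iii) — the elliptic/discretized comparison machinery behind Theorem \ref{Main comparison} only ever uses that the radial data of the second problem dominate in concentration, not that they are literal rearrangements, so the conclusion persists for $\widetilde u_0\succ u_0^{\#}$ and $\widetilde f\succ f^{\#}$. Your interposition of the intermediate solution $w$ and transitivity of $\prec$ is a harmless (and slightly cleaner) repackaging of the same idea, and you correctly identify why a naive direct application of Theorem \ref{Main comparison} to the pair $(w,v)$ would not suffice.
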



\section{Comparison between different diffusivities}
\label{sec.comparison.nonlin}

Theorem \ref{Main comparison} and Corollary \ref{corollarycomp} can be extended to a more general situation when problems
\eqref{eq.1}-\eqref{eqcauchysymm.f}, or problems \eqref{eq.1}-\eqref{eqcauchysymm.f1}, display two different diffusivities, on the condition that one is
more concentrated then the other. Actually, the only assumption we require is that one of them needs to be concave. To this aim, we introduce the following
concept of comparison of
nonlinearities.
\begin{definition}
Assume that $A,\widetilde{A}:\R_{+}\rightarrow\R{+}$ are two functions which are smooth in $(0,\infty)$. We write $\widetilde{A}\prec A$ if
\[
\widetilde{A}^{\prime}(s)\leq A^{\prime}(s), \quad\forall s>0.
\]
\end{definition}
In the application to the evolution problem, we may say that $A$ is more diffusive than $\widetilde{A}$. The following Lemma provides a sufficient condition to
reverse the order relationship $\prec$ when passing to the inverse functions of the nonlinearities:
\begin{lemma}\label{inversrel}
Let $A,\widetilde{A}:\R_{+}\rightarrow\R_{+}$ smooth, increasing functions with $A(0)=\widetilde{A}(0)=0$, and let $B=A^{-1}$ and
$\widetilde{B}=\widetilde{A}^{-1}$. If we assume that $\widetilde{A}$ is concave and \ $\widetilde{A}\prec A,$ \ then \
$B\prec\widetilde{B}.$
\end{lemma}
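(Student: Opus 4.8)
The plan is to work directly from the definitions. We have $\widetilde A, A : \mathbb{R}_+ \to \mathbb{R}_+$ smooth and increasing with $\widetilde A(0)=A(0)=0$, the hypothesis $\widetilde A'(s) \le A'(s)$ for all $s>0$, and $\widetilde A$ concave. We must show $B \prec \widetilde B$, i.e.\ $B'(t) \le \widetilde B'(t)$ for all $t>0$, where $B = A^{-1}$, $\widetilde B = \widetilde A^{-1}$. By the inverse function theorem, for $t>0$ we have $B'(t) = 1/A'(B(t))$ and $\widetilde B'(t) = 1/\widetilde A'(\widetilde B(t))$, so the claim $B'(t) \le \widetilde B'(t)$ is equivalent to
\[
\widetilde A'\big(\widetilde B(t)\big) \le A'\big(B(t)\big), \qquad t>0.
\]
So the task reduces to comparing the derivatives of $A$ and $\widetilde A$ at two (generally different) points.

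**The key geometric input** is that $\widetilde A \prec A$ together with $\widetilde A(0)=A(0)=0$ forces $\widetilde A(s) \le A(s)$ for all $s \ge 0$, simply by integrating the derivative inequality from $0$: $\widetilde A(s) = \int_0^s \widetilde A'(\tau)\,d\tau \le \int_0^s A'(\tau)\,d\tau = A(s)$. Applying $B$ (which is increasing) gives $B(\widetilde A(s)) \le B(A(s)) = s$; writing $s = \widetilde B(t)$ for $t>0$ yields $B(t) \le \widetilde B(t)$. So at the relevant evaluation points we have the ordering $B(t) \le \widetilde B(t)$.

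**Now the final step uses concavity.** Since $\widetilde A$ is concave, $\widetilde A'$ is nonincreasing, so from $B(t) \le \widetilde B(t)$ we get $\widetilde A'(\widetilde B(t)) \le \widetilde A'(B(t))$. Combining with the hypothesis $\widetilde A'(B(t)) \le A'(B(t))$ (applied at the point $s = B(t) > 0$) gives exactly
\[
\widetilde A'\big(\widetilde B(t)\big) \;\le\; \widetilde A'\big(B(t)\big) \;\le\; A'\big(B(t)\big),
\]
which is the inequality we needed, hence $B'(t) \le \widetilde B'(t)$ and $B \prec \widetilde B$.

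**I expect no serious obstacle here**; the argument is a short chain of three elementary facts (integrate the derivative bound to compare the functions themselves; invert to compare at the shifted points; use monotonicity of $\widetilde A'$ from concavity to absorb the point shift). The only thing to be slightly careful about is the role of concavity: it is essential and is used precisely to move the inequality $B(t)\le\widetilde B(t)$ ``through'' $\widetilde A'$ in the right direction — without it one cannot control $\widetilde A'(\widetilde B(t))$ by something evaluated at $B(t)$. One should also note in passing that the derivatives are well defined and positive on $(0,\infty)$ by the smoothness and strict monotonicity assumptions, so all the inverse-function-theorem manipulations are legitimate for $t>0$.
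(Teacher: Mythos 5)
Your proof is correct and follows essentially the same route as the paper: in the paper's notation, your points $B(t)$ and $\widetilde B(t)$ are its $s_1$ and $s_2$ with $A(s_1)=\widetilde A(s_2)=t_0$, and the chain $\widetilde A'(\widetilde B(t))\le \widetilde A'(B(t))\le A'(B(t))$ is exactly the paper's $A'(s_1)\ge\widetilde A'(s_1)\ge\widetilde A'(s_2)$ followed by the inverse function theorem. If anything, you are slightly more explicit than the paper, which leaves the needed ordering $s_1\le s_2$ (your $B(t)\le\widetilde B(t)$, obtained by integrating $\widetilde A'\le A'$ from $0$) implicit.
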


\begin{proof}
We have $\widetilde{A}^{\prime}(s)\leq A^{\prime}(s)$ for every $s>0$ and $\widetilde{A}^{\prime}(s_1)\geq \widetilde{A}^{\prime}(s_2)$ for $0<s_1<s_2$. Pick
some $t_0>0$ and let $A(s_1)=\widetilde{A}(s_2)=t_0$. By the stated properties, we have
\[
A^{\prime}(s_1)\geq\widetilde{A}^{\prime}(s_1)\geq \widetilde{A}^{\prime}(s_2).
\]
But the inverse function theorem gives $B^{\prime}(t_0)=1/A^{\prime}(s_1)$ and $\widetilde{B}^{\prime}(t_0)=1/\widetilde{A}^{\prime}(s_2)$. Therefore,
$B^{\prime}(t_0)\leq \widetilde{B}^{\prime}(t_0)$ and this means $B\prec \widetilde{B}$.
\end{proof}

It is then possible to generalize Theorem 3.4 of \cite{VazVol} in the following way:
\begin{theorem}\label{ellcompdiff}
 Let $v$ be the nonnegative solution of  problem
 \begin{equation} \label{whole}
\left\{
\begin{array}
[c]{lll}%
\left(  -\Delta\right)  ^{\sigma/2}v+  B(v)=f\left(  x\right)   &  & in\text{ }%
\R^{N}\\
&  & \\
v(x)\rightarrow0 &  & as\text{ }|x|\rightarrow\infty,
\end{array}
\right. %
\end{equation}
 posed in $\Omega=\ren$, with nonnegative data $f\in L^1(\ren)$ and nonlinearity given by a strictly increasing function $B:\R_{+}\rightarrow\R_{+}$ which is
 smooth, and superlinear:  $B(t)\geq \varepsilon t$
for
some
$\varepsilon>0$ and all $t\geq0$ and $B(0)=0$ . Assume that $\widetilde{B}:\R_{+}\rightarrow\R_{+}$ is a convex function, satisfying the same assumptions of
$B$,
such that
\[
B\prec\widetilde{B},
\]
and let $V$ be the solution of the radial problem
 \begin{equation} \label{wholesym.}
\left\{
\begin{array}
[c]{lll}%
\left(  -\Delta\right)  ^{\sigma/2}V+  \widetilde{B}(V)=f^{\#}\left(  x\right)   &  & in\text{ }%
\R^{N}\\
&  & \\
V(x)\rightarrow0 &  & as\text{ }|x|\rightarrow\infty,
\end{array}
\right. %
\end{equation}
Then, we have the comparison
\begin{equation*}
v^\#\prec V, \qquad B(v^\#) \prec \widetilde{B}(V).
\end{equation*}
\end{theorem}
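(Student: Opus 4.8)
The plan is to reduce the comparison between two \emph{different} diffusivities to the already-known comparison for a \emph{single} diffusivity (Theorem~3.4 of \cite{VazVol}, the elliptic analogue of Theorem~\ref{Main comparison}), by using the superlinearity and monotonicity structure to pass between $v$ and an auxiliary solution governed by $\widetilde B$. First I would rewrite both problems in the ``inverse'' variables: setting $w=B(v)$ and $W=\widetilde B(V)$, problem \eqref{whole} becomes $(-\Delta)^{\sigma/2}w + w = f + (w - B^{-1}(w)\cdot 0)$, or more cleanly, $v = A(w)$ with $A=B^{-1}$, so that \eqref{whole} reads $(-\Delta)^{\sigma/2}w + \widehat{}\,$… — the cleaner route is to keep $v$ as the unknown but observe that, since $B\prec\widetilde B$ means $B'\le\widetilde B'$, we have $B(t)\le\widetilde B(t)$ for all $t\ge 0$ (integrating from $0$ using $B(0)=\widetilde B(0)=0$), hence $\widetilde B(v)\ge B(v)=f-(-\Delta)^{\sigma/2}v$. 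I would like to conclude that $v$ is a \emph{subsolution} of the $\widetilde B$-problem with the same data $f$, but the nonlocal term prevents a naive pointwise argument; so instead I would introduce the solution $\bar v$ of $(-\Delta)^{\sigma/2}\bar v + \widetilde B(\bar v) = f$ in $\R^N$ and compare $v$ with $\bar v$ first, then compare $\bar v$ with $V$.

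The comparison $v$ versus $\bar v$ (same data $f$, diffusivities $B$ and $\widetilde B$ with $B\le\widetilde B$ pointwise) is an ordinary comparison-principle statement for the elliptic fractional equation: since $\widetilde B$ is monotone and $(-\Delta)^{\sigma/2}$ satisfies the maximum principle, and $\widetilde B(v)\ge B(v)$, one gets $v\ge\bar v$ pointwise (the larger absorption term forces a smaller solution); consequently $v^\#\ge\bar v^\#$ need not follow directly, but $v\ge\bar v\ge 0$ does give $\bar v\prec v$ is \emph{false} in general — here I must be careful. The right statement is: more absorption yields a \emph{smaller}, hence \emph{less concentrated in the pointwise but not necessarily in the $\prec$ sense}. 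This is exactly the subtlety the theorem is designed to handle via Lemma~\ref{inversrel}: one should not compare $v$ and $\bar v$ but rather pass to inverse nonlinearities. So the correct plan is: apply Theorem~3.4 of \cite{VazVol} (single-diffusivity concentration comparison) to the problem written as $(-\Delta)^{\sigma/2}w + \Psi(w) = f$ where $w$ plays the role of the ``potential'' $B(v)$, $\Psi = B^{-1}=A$ composed appropriately — since $B(v)=w$ gives $v=A(w)$ and \eqref{whole} becomes $(-\Delta)^{\sigma/2}(\,\cdot\,)$ acting on $v=A(w)$… which again is not of the required form. The genuinely clean device is: let $\phi = B(v)$, so $(-\Delta)^{\sigma/2}v + \phi = f$ with $v = A(\phi)$, $A=B^{-1}$, i.e. $(-\Delta)^{\sigma/2}A(\phi) + \phi = f$; by Lemma~\ref{inversrel}, $\widetilde A := \widetilde B^{-1}$ satisfies $A\prec\widetilde A$ is reversed to $\widetilde A \prec A$… — I would carefully track the direction: Lemma~\ref{inversrel} with $\widetilde A$ concave and $\widetilde A\prec A$ gives $B\prec\widetilde B$; read in reverse, $B\prec\widetilde B$ together with $B^{-1}=A$ having $A$ concave would give $\widetilde B^{-1}\prec A$, i.e. $\widetilde A\prec A$ with $A$ now the \emph{concave} one. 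Thus the transformed problems $(-\Delta)^{\sigma/2}A(\phi)+\phi=f$ and $(-\Delta)^{\sigma/2}\widetilde A(\Phi)+\Phi=f^\#$ fall under the single-diffusivity elliptic theorem \emph{after} one checks that $A\le\widetilde A$ on the relevant range lets Corollary~\ref{corollarycomp}-type monotonicity absorb the diffusivity difference.

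Concretely I would proceed in three steps. Step 1: record the elementary facts that $B\prec\widetilde B\Rightarrow B\le\widetilde B$ pointwise on $\R_+$ (integrate $B'\le\widetilde B'$ from $0$), and that superlinearity $B(t)\ge\varepsilon t$ guarantees the elliptic problems \eqref{whole}, \eqref{wholesym.} are well-posed with $v,V\in L^1(\R^N)$ decaying at infinity, so Lemma~\ref{lemma1} and the $\prec$ machinery apply. Step 2: apply the single-diffusivity concentration comparison (Theorem~3.4 of \cite{VazVol}) to $\widetilde B$ alone, obtaining that the solution $\bar v$ of $(-\Delta)^{\sigma/2}\bar v+\widetilde B(\bar v)=f$ satisfies $\bar v^\#\prec V$ and $\widetilde B(\bar v)^\#\prec\widetilde B(V)$ — this is legitimate because $\widetilde B$ being \emph{convex} is allowed in the elliptic theorem of \cite{VazVol} (unlike the parabolic case, concavity is not required there; the elliptic Talenti-type comparison runs in the convex direction). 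Step 3: compare $v$ with $\bar v$: both solve the fractional elliptic equation with data $f$, with nonlinearities $B\le\widetilde B$; by the comparison principle for $(-\Delta)^{\sigma/2}+\text{monotone absorption}$, $v\ge\bar v\ge 0$ pointwise, and since both are radially decreasing (data $f$ in the $v$-case need not be radial — so actually one compares $v^\#$ with $\bar v$, using that $\bar v$ solves the symmetrized-data problem and $f^\#\prec$ itself trivially; here Corollary~\ref{corollarycomp} applied to $\widetilde B$ handles the data symmetrization), and pointwise domination of radially decreasing functions implies $\prec$, we get $\bar v\prec v^\#$ — again the direction must be checked: more absorption $\Rightarrow$ smaller solution $\Rightarrow$ $\bar v\le v^\#$ pointwise $\Rightarrow$ $\bar v\prec v^\#$, which is the \emph{wrong} direction for chaining with Step 2. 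Hence Step 3 must instead be run on the inverse functions exactly as Lemma~\ref{inversrel} dictates: with $\phi=B(v)$, $\bar\phi=\widetilde B(\bar v)$ one has $(-\Delta)^{\sigma/2}A(\phi)+\phi=f=(-\Delta)^{\sigma/2}\widetilde A(\bar\phi)+\bar\phi$ with $\widetilde A\prec A$ (from Lemma~\ref{inversrel}), and then the single-diffusivity-type argument on the potentials, combined with $A$ concave, yields $\phi^\#\prec\bar\phi$ and $A(\phi)^\#=v^\#\prec\widetilde A(\bar\phi)=\bar v$; chaining with Step 2 gives $v^\#\prec\bar v^\#\prec V$ and $B(v^\#)=B(v)^\#\prec\widetilde B(\bar v)^\#\prec\widetilde B(V)$, which is the claim. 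The main obstacle is precisely this bookkeeping of the direction of $\prec$ under the inverse-function passage and under ``more absorption'': one has to invoke Lemma~\ref{inversrel} at exactly the right place so that the concave nonlinearity $A$ (not the convex $\widetilde B$) is the one to which the Talenti/Vázquez concentration comparison is applied, and to make sure the symmetrization of the non-radial datum $f$ is absorbed by an application of Corollary~\ref{corollarycomp} rather than assumed. Once the diagram of implications is laid out correctly, each individual link is either Theorem~3.4 of \cite{VazVol}, Corollary~\ref{corollarycomp}, Lemma~\ref{inversrel}, or the elementary pointwise comparison principle for the fractional elliptic operator with monotone zeroth-order term.
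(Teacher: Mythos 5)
Your proposal never closes the one gap that actually matters: the comparison between two solutions driven by \emph{different} nonlinearities with the \emph{same} datum $f$. You correctly observe that the naive pointwise comparison principle (``more absorption $\Rightarrow$ smaller solution'') runs in the wrong direction for $\prec$, and you then retreat to the inverse-function formulation $(-\Delta)^{\sigma/2}A(\phi)+\phi=f$ versus $(-\Delta)^{\sigma/2}\widetilde A(\bar\phi)+\bar\phi=f$ with $\widetilde A\prec A$. But that transformed problem is still a two-diffusivity comparison --- it is exactly an instance of the statement you are trying to prove, not of the single-diffusivity Theorem~3.4 of \cite{VazVol}. The sentence ``the single-diffusivity-type argument on the potentials, combined with $A$ concave, yields $\phi^\#\prec\bar\phi$'' is an assertion with no argument behind it; no chaining of Theorem~3.4, Corollary~\ref{corollarycomp}, Lemma~\ref{inversrel} and the pointwise maximum principle produces this link, precisely because each of those tools either keeps the nonlinearity fixed or gives pointwise (not concentration) information in the unusable direction. (A secondary issue: even granting $v^\#\prec\bar v$, the step $B(v)^\#\prec\widetilde B(\bar v)^\#$ does not follow from Lemma~\ref{lemma1}, which yields integral inequalities $\int\Phi(f)\le\int\Phi(g)$ for convex $\Phi$, not a concentration comparison of the composed functions.)

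The paper does not chain at all: it reproves the symmetrization estimate from scratch with both nonlinearities present simultaneously. One passes to the Caffarelli--Silvestre extensions $w,\psi$ of $v,V$, changes variables in $y$, and forms $Z(s,z)=\int_0^s\bigl(w^*(\tau,z)-\psi^*(\tau,z)\bigr)\,d\tau$, which satisfies the usual degenerate elliptic differential inequality in $(s,z)$. The only place where the two nonlinearities interact is the Neumann boundary term, where one needs to control $\int_0^s\bigl(B(w^*(\tau,0))-\widetilde B(\psi^*(\tau,0))\bigr)\,d\tau$. The single new ingredient is the two-step estimate
\begin{equation*}
B(w^*)-\widetilde B(\psi^*)\ \le\ \widetilde B(w^*)-\widetilde B(\psi^*)\ =\ \widetilde B'(\xi)\,(w^*-\psi^*)\ \le\ \widetilde B'(w^*)\,(w^*-\psi^*),
\end{equation*}
where the first inequality uses $B\le\widetilde B$ (your Step~1 observation, which is correct and is indeed used), the equality is the mean value theorem, and the last inequality uses the convexity of $\widetilde B$ (checking both signs of $w^*-\psi^*$). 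This reduces the boundary condition to exactly the form appearing in the one-nonlinearity proof of Theorem~3.4 of \cite{VazVol}, after which the maximum-principle argument giving $Z\le 0$ runs unchanged, and $Z\le0$ at $z=0$ yields $v^\#\prec V$ and $\int_0^s B(v^*)\le\int_0^s B(V^*)\le\int_0^s\widetilde B(V^*)$. If you want to salvage your write-up, replace the chaining scheme by this direct modification of the extension argument; the convexity of $\widetilde B$ is not a technicality to be routed around via Lemma~\ref{inversrel} but the precise hypothesis that makes the boundary estimate close.
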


\begin{proof}
We keep track of the proof of Theorem 3.4 in \cite{VazVol}, replacing $B$ with  $\widetilde{B}$ in the radial problems.
We have to compare the solution $w$ to problem
\begin{equation}
\left\{
\begin{array}
[c]{lll}%
-\operatorname{div}_{x,y}\left(  y^{1-\sigma}\nabla w\right)  =0 &  & in\text{
}\R^{N}\times(0,+\infty)\\[6pt]
\displaystyle{-\frac{1}{\kappa_{\sigma}}\lim_{y\rightarrow0^{+}}y^{1-\sigma}\,\dfrac{\partial w}{\partial y}(x,y)}+\,B(w(x,0))=f\left(  x\right)   &
&
x\in\R^{N},
\end{array}
\right.  \label{wholeext}%
\end{equation}
with the solution $\psi$ to the problem
\begin{equation}
\left\{
\begin{array}
[c]{lll}%
-\operatorname{div}_{x,y}\left(  y^{1-\sigma}\nabla \psi\right)  =0 &  & in\text{
}\R^{N}\times(0,+\infty)\\[6pt]
\displaystyle{-\frac{1}{\kappa_{\sigma}}\lim_{y\rightarrow0^{+}}y^{1-\sigma}\,\dfrac{\partial \psi}{\partial
y}(x,y)}+\,\widetilde{B}(\psi(x,0))=f^{\#}\left(
x\right)   &
&
x\in\R^{N}.
\end{array}
\right.  \label{wholeextsym}%
\end{equation}
Recall that the traces of $w$, $\psi$ over $\ren\times\left\{0\right\}$ are the solutions $v$, $V$ to problems \eqref{whole}, \eqref{wholesym.}. Using the
change
of
variables
\[
z=\left(  \frac{y}{\sigma}\right)  ^{\sigma},
\]
problems \eqref{wholeext} and \eqref{wholeextsym} become respectively

\begin{equation}
\left\{
\begin{array}
[c]{lll}%
-z^{\nu}\dfrac{\partial^{2}w}{\partial z^{2}}-\Delta_{x}w=0 &  & in\text{
}\R^{N}\times(0,+\infty)\\[6pt]
-\dfrac{\partial w}{\partial z}\left(  x,0\right)  =%
\,\sigma^{\sigma-1}\kappa_{\sigma}\left(f\left(  x\right)-B(w(x,0))\right)  &
&
x\in\R^{N},
\end{array}
\right.  \label{wholeextz}%
\end{equation}
and
\begin{equation}
\left\{
\begin{array}
[c]{lll}%
-z^{\nu}\dfrac{\partial^{2}\psi}{\partial z^{2}}-\Delta_{x}\psi=0 &  & in\text{
}\R^{N}\times(0,+\infty)\\[6pt]
-\dfrac{\partial \psi}{\partial z}\left(  x,0\right)  =%
\,\sigma^{\sigma-1}\kappa_{\sigma}\left(f^{\#}\left(  x\right)-\widetilde{B}(\psi(x,0))\right)  &
&
x\in\R^{N},
\end{array}
\right.  \label{wholeextsymz}%
\end{equation}
where $\nu:=2\left(  \sigma-1\right)  /\sigma.$ \
Then, the problem reduces to prove the concentration comparison between the solutions $w(x,z)$ and $\psi(x,z)$ to \eqref{wholeextz}-\eqref{wholeextsymz}.
We
introduce the function
\begin{equation*}
{Z}(s,z)=\int_{0}^{s}(w^{\ast}(\tau,z)-\psi^{\ast}(\tau,z))d\tau,
\end{equation*}
then using standard symmetrization tools, we get the inequality
\begin{equation*}
-(z^{\nu}{Z}_{zz}+p(  s) {Z}_{ss})\leq0\label{symineq}
\end{equation*}
for a.e. $(s,z)\in \left(  0,\infty \right)
\times\left(  0,+\infty\right)  $,  where $p (s) = N^2\omega_N^{2/N} s^{2- 2/ N}$. Obviously, we have
\begin{equation*}
Z(0,y)={Z}_{s}(\infty,y)=0\label{boundcond}.
\end{equation*}
Concerning the derivative of $Z$ with respect to $z$, due to the boundary conditions contained
in
\eqref{wholeextz}-\eqref{wholeextsymz}, we have
\begin{equation*}\label{Z_yboundary.formula}
{Z}_{z}(s,0)\geq \theta_{\sigma}Y(s,0)
\end{equation*}
where $\theta_{\sigma}:=\sigma^{\sigma-1}\kappa_{\sigma},$ and
$$
Y(s,0)=\int_{0}^{s}B(w^*(\tau,0))-\widetilde{B}(\psi^*(\tau,0))\, d\tau.
$$
Now, since $B\prec\widetilde{B}$, we have
\[
B(w^*(\tau,0))-\widetilde{B}(\psi^*(\tau,0))\leq \widetilde{B}(w^*(\tau,0)-\widetilde{B}(\psi^*(\tau,0))
=\widetilde{B}'(\xi)(w^*(\tau,0))-\psi^*(\tau,0))
\]
where $\xi $ is an intermediate value between $w^*(\tau,0)$
and $\psi^*(\tau,0)$. As $\widetilde{B}$ is convex, $\widetilde
{B}^{\prime}$ is an increasing real function and
$$
B(w^*(\tau,0))-\widetilde{B}(\psi^*(\tau,0))\leq \widetilde{B}'(w^*(\tau,0))(w^*(\tau,0)-\psi^*(\tau,0)).
$$
Now the proof proceeds exactly as in the proof Theorem 3.4 in \cite{VazVol}, up to replacing $B$ with $\widetilde{B}$, and it allows to show that
\[
Z(s,z)\leq0.
\]
Then
\[
\int_{0}^{s}B(v^{*})d\sigma\leq \int_{0}^{s}B(V^{*})d\sigma\leq \int_{0}^{s}\widetilde{B}(V^{*})d\sigma,
\]
and the result is proved.
\end{proof}
Furthermore, a clear generalization of Theorems 3.7-3.8 in \cite{VazVol} with the nonlinearities $B,\widetilde{B}$ still hold.

Using the Crandall-Liggett Theorem in nonlinear semigroup theory, we can use Lemma \ref{inversrel}, Theorem \ref{ellcompdiff}, and get the inspiration from the
proofs of Theorems 5.3-5.4 in \cite{VazVol} to show the following result, generalizing Theorem \ref{Main comparison} and Corollary \ref{corollarycomp}:

\begin{theorem}[Parabolic comparison result with different diffusivities]\label{thm.comp.diff.diff}
Let $u$ be the nonnegative mild solution to problem
\begin{equation*}
\label{eqcauchy.f}
\left\{
\begin{array}
[c]{lll}%
u_t+(-\Delta)^{\sigma/2}A(u)=f  &  & x\in\R^{N}, \ t>0\,,%
\\[6pt]
u(x,0)=u_{0}(x) &  & x\in\R^{N},
\end{array}
\right. %
\end{equation*}
 with $0<\sigma<2$, with
initial data $u_0\in L^1(\ren)$, $u_0\ge 0$, right-hand side $f\in L^1(Q)$, $f\ge 0$, and smooth, strictly increasing nonlinearity $A(u)$ on $\R_+$ with
$A(0)=0$. Assume that $\widetilde{A}$ is a concave nonlinearity on $\R_+$ satisfying the same assumptions of $A$ and let $v$ be the mild solution to
\begin{equation}\label{symmorconc}
\left\{
\begin{array}
[c]{lll}%
v_t+(-\Delta)^{\sigma/2}\widetilde{A}(v)=\widetilde{f}(|x|,t)  &  & x\in\R^{N}\,, \ t>0,%
\\[6pt]
v(x,0)=\widetilde{u}_{0}(x) &  & x\in\R^{N},
\end{array}
\right. %
\end{equation}
where $\widetilde{f}\in L^{1}(Q)$, $\widetilde{u}_{0} \in L^{1}(\R^{N})$ are nonnegative, radially symmetric decreasing functions with respect
to $x$. If moreover
\[
\widetilde{A}\prec A,\quad\,u_{0}^{\#}(|x|)\prec\widetilde{u}_{0}(|x|),\quad f^{\#}(|x|,t)\prec\widetilde{f}(|x|,t),
\]
for almost all $t>0$, then  the conclusion $u^\#(|x|,t)\prec v(|x|,t)$   still holds.
\end{theorem}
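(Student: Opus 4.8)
The plan is to reduce the parabolic statement to the elliptic comparison already established in Theorem~\ref{ellcompdiff}, using the Crandall--Liggett exponential formula to represent the mild solutions of both Cauchy problems as limits of implicit time discretizations. First I would set up the time-discrete scheme: fixing a step $h>0$, let $u^n_h$ be defined recursively by $u^0_h = u_0^\#$ (we may symmetrize the data at the start, invoking Theorem~\ref{Main comparison} for the left equation to replace $u$ by $u^\#$, which only increases concentration) and
\[
u^n_h + h\,(-\Delta)^{\sigma/2}A(u^n_h) = u^{n-1}_h + h\, f_h^n,
\]
where $f_h^n$ is a time-average of $f^\#$ on the $n$-th interval; similarly define $v^n_h$ with $\widetilde{A}$, $\widetilde{u}_0$, $\widetilde{f}$. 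Each implicit step is exactly an elliptic problem of the type in \eqref{whole}: writing $B = A^{-1}$ and $\widetilde B = \widetilde{A}^{-1}$, the unknown $w = A(u^n_h)$ solves $(-\Delta)^{\sigma/2}w + h^{-1}B(w) = h^{-1}u^{n-1}_h + f_h^n$, and likewise for the other equation with $\widetilde B$. Here the superlinearity hypothesis $B(t)\ge \varepsilon t$ needed in Theorem~\ref{ellcompdiff} is harmless: $h^{-1}B(w) \ge h^{-1}\varepsilon' w$ automatically for the rescaled nonlinearity, or one adds a vanishing lower-order term and removes it in the limit.

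Next I would run the induction on $n$. Suppose $(u^{n-1}_h)^\# \prec v^{n-1}_h$ (true at $n=0$ by hypothesis on the data). By Lemma~\ref{inversrel}, since $\widetilde A$ is concave and $\widetilde A \prec A$, we get $B \prec \widetilde B$, which is precisely the ordering of nonlinearities required by Theorem~\ref{ellcompdiff}. Applying that theorem to the two elliptic problems — whose right-hand sides satisfy $(h^{-1}u^{n-1}_h + f_h^n)^\# \prec h^{-1}v^{n-1}_h + \widetilde f_h^n$ by the induction hypothesis, the data ordering $f^\#\prec\widetilde f$, and the elementary fact that $\prec$ is preserved under addition of ordered radial functions — yields $A(u^n_h)^\# \prec \widetilde A(v^n_h)$, hence also $(u^n_h)^\# \prec v^n_h$ after applying the increasing functions' inverses (using that $B\prec\widetilde B$ transfers concentration of $A(u)$ to concentration of $u$, as in the last display of the proof of Theorem~\ref{ellcompdiff}). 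This closes the induction, so $(u^n_h)^\# \prec v^n_h$ for all $n$.

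Finally I would pass to the limit $h\to 0$ with $nh\to t$. The Crandall--Liggett theorem guarantees $u^n_h \to u(\cdot,t)$ and $v^n_h \to v(\cdot,t)$ in $L^1(\ren)$, uniformly on compact time intervals, for the mild solutions of the respective Cauchy problems. Since mass concentration $\prec$ is closed under $L^1$-convergence of radial functions — the defining inequalities $\int_{B_R}(u^n_h)^\# \le \int_{B_R} v^n_h$ pass to the limit for each fixed $R$ — we obtain $u^\#(|x|,t)\prec v(|x|,t)$, which is the claim.

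The main obstacle I anticipate is \emph{not} the elliptic step (that is handed to us by Theorem~\ref{ellcompdiff}) but the bookkeeping in the discrete-to-continuous passage: one must check that the implicit Euler scheme for the \emph{two different} nonlinearities $A$ and $\widetilde A$ both converge to the correct mild solutions — the Crandall--Liggett framework applies to each $m$-accretive operator $u\mapsto (-\Delta)^{\sigma/2}A(u)$ separately, and the concavity/superlinearity conditions must be verified to place us inside its scope (in particular handling the non-integrability or degeneracy of $A'$ at $0$ that occurs for $A(u)=u^m$, $m<1$). One also needs the source terms $f_h^n$ to be chosen so that $\sum_n h f_h^n \to \int_0^t f^\#\,ds$ appropriately and the concentration ordering of the averaged sources is maintained at every step; this is routine but requires care because $f^\#$ is only $L^1$ in time. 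Once these approximation facts are in place — all standard consequences of nonlinear semigroup theory as used in \cite{VazVol} for Theorems~5.3--5.4 — the comparison follows by the monotone passage to the limit described above.
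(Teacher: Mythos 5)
Your route is exactly the one the paper indicates: the paper gives no written proof of this theorem, only the one-sentence recipe (Crandall--Liggett time discretization, Lemma \ref{inversrel} to pass from $\widetilde A\prec A$ to $B\prec\widetilde B$, the elliptic comparison of Theorem \ref{ellcompdiff} at each implicit step, and a passage to the limit as in Theorems 5.3--5.4 of \cite{VazVol}), and your induction and limit passage implement precisely that. Two points, however, need fixing.

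First, the parenthetical choice $u^0_h=u_0^{\#}$ is both unjustified and self-defeating. Replacing $u$ by the solution with symmetrized data would require the parabolic symmetrization comparison of Theorem \ref{Main comparison} for the nonlinearity $A$, which is available only when $A$ is concave --- and the whole point of Theorem \ref{thm.comp.diff.diff} is that only $\widetilde A$ is assumed concave (recall that \cite{VazVol} exhibits a counterexample to concentration comparison for convex $A$). Moreover, if you start the scheme from $u_0^{\#}$, its Crandall--Liggett limit is the mild solution with datum $u_0^{\#}$, not $u$, so the inequality you would prove is not the stated one. The remedy is simply to delete that step: run the implicit scheme on the original, non-radial data $u_0$ and $f$, and let Theorem \ref{ellcompdiff} perform the symmetrization at each elliptic step --- which is what your induction hypothesis $(u^{n-1}_h)^{\#}\prec v^{n-1}_h$ already presupposes, since otherwise the rearrangement sign there would be vacuous.

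Second, at each step the radial elliptic problem has right-hand side $h^{-1}v^{n-1}_h+\widetilde f^{\,n}_h$, which is only \emph{more concentrated than}, not equal to, the rearrangement of $h^{-1}u^{n-1}_h+f^n_h$. Theorem \ref{ellcompdiff} as stated compares against the exact rearrangement $f^{\#}$, so you must invoke the version with merely $\prec$-ordered radial data; this is the ``clear generalization of Theorems 3.7--3.8 in \cite{VazVol}'' recorded in the paper right after the proof of Theorem \ref{ellcompdiff}, and it should be cited explicitly at this point. With these two adjustments your argument is the intended one.
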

\nc

\section{Optimal estimates for integrable data}
\label{sec.optimal}

Let us consider the Fractional PME: \
\begin{equation}
u_t+(-\Delta)^{\sigma/2}u^m=0\label{FPME}
\end{equation}
in $\ren$ with $\sigma\in (0,2)$, $N\geq2$. Assuming also that $m$ belongs to the so called \emph{supercritical range}
\begin{equation}
 m>m_c:=\frac{N-\sigma}{N}\,,\label{supFFD}
\end{equation}
the following regularizing effect has been proved in \cite{pqrv}: for every
initial data $u_0\in L^1(\ren)$, $u_0\ge 0$, the solution $u$ to problem
\begin{equation} \label{eqcauchy.3}
\left\{
\begin{array}
[c]{lll}%
u_t+(-\Delta)^{\sigma/2}u^m=0  &  & x\in\R^{N}\,,t>0%
\\[6pt]
u(x,0)=u_{0}(x) &  & x\in\R^{N}
\end{array}
\right. %
\end{equation}
satisfies
\begin{equation}\label{smoothing}
 u(x,t)\le C_{\sigma,m,N} \,\|u_0\|_1^{\sigma\beta}\,t^{-\alpha}\,,
\end{equation}
with the explicit exponents given by
\begin{equation}\label{eq.2}
\beta=\frac{1}{N(m-1)+\sigma}\,, \quad \alpha=N\beta\,
\end{equation}
and a constant $C_{\sigma,m,N}$ that is finite but not determined. The exponents in this formula are sharp, and they can be obtained from dimensional
considerations. This is demonstrated by the construction
of the fundamental solutions, called Barenblatt solutions, that was done \cite{vazBaren}. We briefly recall that the Barenblatt solution with initial
data $u(x,0)=M\,\delta(x)$, $M>0$, has the form
\begin{equation}
U(x,t)=t^{-\alpha}F(\xi), \qquad \xi=x\,t^{-\beta}\label{Baren}
\end{equation}
with $\alpha $ and $\beta$ as before (actually, their value follows from dimensional considerations),
and $F=F_{\sigma,m,N,M}(\xi)$ is  the Barenblatt profile corresponding to exponents $\sigma, m$ and mass $M$.  It is also known that $F>0$ is bounded,
radially symmetric, decreasing in $|\xi|$,  and it decays like a negative power of  $|\xi|$ as $|\xi|\to\infty$. Moreover, the Barenblatt solution is
also smooth, as follows from the regularity results of \cite{pqrv4}.

As a first application of  Theorem \ref{Main comparison} we will calculate the best
constant in this regularizing effect, since the Barenblatt solution plays the role of {\sl worst case.}

\begin{theorem}\label{thm.opt}For every $m\in (m_c,1]$ the optimal constant in \eqref{smoothing} is given by
\begin{equation}
C_{\sigma,m,N}=F_{\sigma,m,N,1}(0)\,,
\end{equation}
where $F_{\sigma,m,1}(\xi)$ is the profile of the Barenblatt solution of the FPME with mass $M=1$. In particular, when $\sigma=1$ and $m=1$ the best
constant is explicit:
\begin{equation}
C_{1,1,N}=\Gamma((N+1)/2)\pi^{-(N+1)/2}.
\end{equation}
\end{theorem}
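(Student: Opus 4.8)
The plan is to prove the two inequalities $C_{\sigma,m,N}\ge F_{\sigma,m,N,1}(0)$ and $C_{\sigma,m,N}\le F_{\sigma,m,N,1}(0)$ separately, using the Barenblatt solution as the worst case. The key preliminary fact is the scaling of the Barenblatt solutions: writing $U_M$ for the Barenblatt solution of \eqref{FPME} of mass $M$, one checks directly, via the scaling property of $(-\Delta)^{\sigma/2}$, that $U_M(x,t)=M^{\sigma\beta}\,U_1\big(M^{\beta(1-m)}x,\,t\big)$ solves \eqref{FPME} and has mass $M$ whenever $U_1$ does (here one uses $N(m-1)+\sigma=1/\beta$ with $\alpha,\beta$ as in \eqref{eq.2}). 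In particular $U_M(0,t)=M^{\sigma\beta}\,F_{\sigma,m,N,1}(0)\,t^{-\alpha}$, and $F_{\sigma,m,N,1}(0)<\infty$ since the profile $F$ is bounded.

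For the lower bound I would use the Barenblatt solution itself as a competitor in \eqref{smoothing}. Fix $\varepsilon>0$ and take $u_0:=U_M(\cdot,\varepsilon)$, which lies in $L^1(\ren)$ and has $\|u_0\|_1=M$ (mass is conserved in the range $m>m_c$). By uniqueness and the time-translation invariance of \eqref{FPME} the corresponding solution is $u(x,t)=U_M(x,t+\varepsilon)$, so $u(0,t)=M^{\sigma\beta}F_{\sigma,m,N,1}(0)\,(t+\varepsilon)^{-\alpha}$. Inserting this into \eqref{smoothing} forces $C_{\sigma,m,N}\ge F_{\sigma,m,N,1}(0)\,(t/(t+\varepsilon))^{\alpha}$, and letting $\varepsilon\to0$ gives $C_{\sigma,m,N}\ge F_{\sigma,m,N,1}(0)$.

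For the upper bound, fix $u_0\in L^1(\ren)$, $u_0\ge0$, and set $M:=\|u_0\|_1$. For $\lambda\ge1$ put $u_0^{\lambda}(x):=\lambda^N u_0^{\#}(\lambda x)$; this is a nonnegative radially symmetric decreasing function in $L^1(\ren)$ of mass $M$, and since $\int_{B_R(0)}u_0^{\lambda}\,dx=\int_{B_{\lambda R}(0)}u_0^{\#}\,dx\ge\int_{B_R(0)}u_0^{\#}\,dx$ for every $R>0$, we have $u_0^{\#}\prec u_0^{\lambda}$. Because $A(u)=u^m$ is concave for $0<m\le1$, Corollary~\ref{corollarycomp} (with $f=\widetilde f\equiv0$, $\widetilde u_0=u_0^{\lambda}$) gives $u^{\#}(\cdot,t)\prec v^{\lambda}(\cdot,t)$, where $v^{\lambda}$ solves \eqref{FPME} with datum $u_0^{\lambda}$. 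As $\lambda\to\infty$ one has $u_0^{\lambda}\rightharpoonup M\,\delta$ weakly; by the stability of the fractional fast diffusion flow with respect to initial data and the construction of the Barenblatt solution (\cite{pqrv,vazBaren}), $v^{\lambda}(\cdot,t)\to U_M(\cdot,t)$ in $L^1_{loc}(\ren)$ for each $t>0$, and since the relation $\prec$ is closed under such limits we conclude $u^{\#}(\cdot,t)\prec U_M(\cdot,t)$. As $U_M(\cdot,t)$ is rearranged, the remark after Lemma~\ref{lemma1} yields $\|u(\cdot,t)\|_\infty=u^{\#}(0,t)\le\|U_M(\cdot,t)\|_\infty=U_M(0,t)=M^{\sigma\beta}F_{\sigma,m,N,1}(0)\,t^{-\alpha}$, i.e. $C_{\sigma,m,N}\le F_{\sigma,m,N,1}(0)$. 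Combined with the previous step this proves $C_{\sigma,m,N}=F_{\sigma,m,N,1}(0)$.

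Finally, when $\sigma=1$ and $m=1$, equation \eqref{FPME} is the linear equation $u_t+(-\Delta)^{1/2}u=0$, whose fundamental solution of mass $1$ is the Poisson kernel $P(x,t)=c_N\,t\,(t^2+|x|^2)^{-(N+1)/2}$ with $c_N=\Gamma((N+1)/2)\,\pi^{-(N+1)/2}$; since here $\beta=1$ and $\alpha=N$, we get $F_{1,1,N,1}(\xi)=c_N\,(1+|\xi|^2)^{-(N+1)/2}$, hence $F_{1,1,N,1}(0)=c_N=\Gamma((N+1)/2)\,\pi^{-(N+1)/2}$, as claimed. The step I expect to be the main obstacle is the passage to the limit $\lambda\to\infty$ in the third paragraph: it relies on the stability of the (in general nonlinear) fractional fast diffusion semigroup under initial data converging only weakly as measures, together with the identification of the limit with the Barenblatt solution $U_M$ — both part of the $L^1$/measure-data theory of \cite{pqrv,vazBaren}. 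The remaining points (concavity of $u\mapsto u^m$ for $0<m\le1$, conservation of mass for $m>m_c$, closedness of $\prec$ under $L^1_{loc}$ limits) are routine.
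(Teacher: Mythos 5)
Your proof is correct and follows essentially the same route as the paper: concentration comparison via Corollary \ref{corollarycomp} against radial data that approximate the Dirac mass $M\delta$, identification of the limit with the Barenblatt solution, and the scaling formula \eqref{U} to evaluate its sup. The only difference is that you make explicit the approximation argument (the rescaled family $u_0^{\lambda}$ and the passage to the limit) and the lower-bound/optimality step via $u_0=U_M(\cdot,\varepsilon)$, both of which the paper leaves implicit or defers to \cite{JLVSmoothing}; your details are sound.
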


\noindent {\sl Proof.} The result follows by applying Corollary \ref{corollarycomp}. Indeed, such result implies that the $L^\infty$ norm of a
solution $u$ at time $t>0$ is bounded above by the norm of the solution of any problem with more concentrated radial initial data. At this point, we observe
that
the most  concentrated initial data is the Dirac delta, that produces the Barenblatt solution. Such solution with mass
$M=\|u_{0}\|_{1}$ is given by \eqref{Baren}, where $F_{\sigma,m,M}$ is the profile with mass $M$. By the scaling properties shown in \cite{vazBaren},
formula (8.3), we have
\begin{equation}
U_{\sigma,m,M}(x,t)=t^{-\alpha}\,M^{\sigma\beta}F_{\sigma,m,N,1}\left(M^{(1-m)\beta}\,\frac{x}{t^{\beta}}\right),\label{U}
\end{equation}
so that
$$
\|U_{\sigma,m,M}(\cdot,t)\|_{\infty}=t^{-\alpha}M^{\sigma\beta}F_{\sigma,m,N,1}(0).
$$
There is a problem in justifying the previous argument, since the Dirac delta is not a function. We need an approximation argument that is not
difficult and can be seen in \cite{JLVSmoothing} applied to the standard PME.

For the constant of the linear case $m=1$, with the choice $\sigma=1$ we recall that the fundamental solution if explicit,
$U_{1,1,1}(x,t)=t^{-N}F_{1,1,N,1}(x/t)$ with profile
$$
F_{1,1,N,1}(\xi)=C_N\,(1+|\xi|^2)^{-(N+1)/2}, \quad C_N=\Gamma((N+1)/2)\pi^{-(N+1)/2}.
$$
\qed

\medskip

As a second application of the above comparison theory, we get a general $L^{p}$ smoothing effect, for
all $1\leq p<\infty$.

\begin{theorem}\label{thm.estimate.Lp} Let $m\in (m_c,1]$ and choose $1< p<\infty$ and a nonnegative data $u_{0}\in L^{1}(\ren)\cap L^{\infty}(\ren)$. Then
$u(t)\in L^{p}(\ren)$
for all
$t>0$ and there is a constant $C=C(\sigma,m,N,p)$ such that
\begin{equation}
\|u(t)\|_{p}\leq C \|u_{0}\|_{1}^{\frac{\beta}{p}(N(m-1)+\sigma p)\nc} t^{-\alpha(p-1)/p}.
\end{equation}
The constant $C$ is attained by the Barenblatt profile.
\end{theorem}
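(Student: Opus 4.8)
The plan is to reduce the $L^p$ estimate to the already-proven $L^1$--$L^\infty$ smoothing effect of Theorem \ref{thm.opt}, via interpolation, and then to verify that this yields the claimed exponents and that the Barenblatt profile achieves equality. First I would apply Corollary \ref{corollarycomp}: since the Dirac delta of mass $M=\|u_0\|_1$ is the most concentrated radial datum with that mass, we obtain $u^\#(|x|,t)\prec U_{\sigma,m,M}(|x|,t)$ for all $t>0$, where $U_{\sigma,m,M}$ is the Barenblatt solution \eqref{U}. By the remark following Lemma \ref{lemma1}, concentration comparison of rearranged functions implies $\|u(\cdot,t)\|_p\le \|U_{\sigma,m,M}(\cdot,t)\|_p$ for every $p\in[1,\infty]$. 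So the entire statement reduces to computing $\|U_{\sigma,m,M}(\cdot,t)\|_p$ and checking it has the asserted form; this simultaneously shows the constant is attained by the Barenblatt profile (taking $u_0$ an approximation of $M\delta$, as in the justification in Theorem \ref{thm.opt}).

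Next I would carry out the scaling computation. Starting from \eqref{U},
\begin{equation*}
\|U_{\sigma,m,M}(\cdot,t)\|_p^p = t^{-\alpha p}M^{\sigma\beta p}\int_{\ren}F_{\sigma,m,N,1}\!\left(M^{(1-m)\beta}\,\frac{x}{t^\beta}\right)^p dx.
\end{equation*}
The change of variables $\xi=M^{(1-m)\beta} x\, t^{-\beta}$ produces a Jacobian factor $M^{-N(1-m)\beta}t^{N\beta}=M^{-N(1-m)\beta}t^{\alpha}$, leaving
\begin{equation*}
\|U_{\sigma,m,M}(\cdot,t)\|_p^p = t^{-\alpha(p-1)}\,M^{\sigma\beta p - N(1-m)\beta}\,\|F_{\sigma,m,N,1}\|_p^p.
\end{equation*}
Taking $p$-th roots gives the time exponent $-\alpha(p-1)/p$ immediately, and the mass exponent $\beta(\sigma p - N(1-m))/p = \beta(\sigma p + N(m-1))/p$, which matches $\tfrac{\beta}{p}(N(m-1)+\sigma p)$ in the statement. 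The constant is then $C = C(\sigma,m,N,p) = \|F_{\sigma,m,N,1}\|_{L^p(\ren)}$, which is finite because $F_{\sigma,m,N,1}$ is bounded and decays like a negative power of $|\xi|$; the power is large enough (in the supercritical range $m>m_c$) that the tail is $L^p$-integrable for every finite $p$ — this is the one genuine point to check, using the known decay rate of the Barenblatt profile from \cite{vazBaren}.

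The main obstacle is really a bookkeeping/approximation issue rather than a deep one: the comparison Corollary \ref{corollarycomp} is stated for $L^1$ data, and $M\delta$ is not a function, so as in the proof of Theorem \ref{thm.opt} one must take a sequence $u_{0,n}\to M\delta$ of nonnegative radial $L^1\cap L^\infty$ functions with $u_{0,n}^\#\prec$ (rescaled mollifications of $\delta$) and pass to the limit in the $L^p$ bound, invoking the $L^1$-contraction and continuity of the semigroup (the details being the ones referenced to \cite{JLVSmoothing}). One should also note at the outset that finiteness of $\|u(t)\|_p$ for $t>0$ is an automatic byproduct of the comparison, since the right-hand side is finite; and for completeness remark that the case $m=1$ is included with $N(m-1)+\sigma p = \sigma p$, recovering the linear smoothing exponent $\beta = 1/\sigma$.
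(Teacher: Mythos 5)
Your proof is correct and follows essentially the same route as the paper: comparison with the Barenblatt solution via Corollary \ref{corollarycomp}, followed by the scaling computation of $\|U_{\sigma,m,M}(\cdot,t)\|_p$ from \eqref{U} and the verification (via the known decay rate of the profile) that $F_{\sigma,m,N,1}\in L^p(\ren)$. Your version simply makes explicit the change of variables and the Dirac-delta approximation step that the paper leaves implicit.
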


\begin{proof}
First we observe that by the boundedness of the profile $F=F_{\sigma,m,N,M}$  and its asymptotic behavior we have $F_{\sigma,m,N,1}(\xi)\in L^{p}(\ren)$ for all
$p\in [1,\infty]$. In fact, according to \cite{vazBaren} $F_{\sigma,m,1}(\xi)$ decreases as $|\xi|\to\infty$ like $O(|x|^{-\gamma})$ with
$$
\gamma= \frac{\sigma}{1-m} \quad \mbox{if \ } m\in (m_c,m_1), \qquad \gamma= N+\sigma \quad \mbox{if \ }  m\in (m_1,1),
$$
where $m_1=N/(N+\sigma)$. Then, Corollary \ref{corollarycomp} and formula \eqref{U} imply
\[
\|u(t)\|_{p}\leq\|U_{\sigma,m,M}(t)\|_{p}=t^{-\alpha(p-1)/p}\,\|u_{0}\|_{1}^{\frac{\beta}{p}(N(m-1)+\sigma p)\nc}\,\|F_{m,1}\|_{p}
\]
and the result follows.
\end{proof}

These results can be extended to the more general equation with nonlinearity $A$ not necessarily a power function, by using the comparison result for different
diffusivities, that is Theorem \ref{thm.comp.diff.diff}. We have

\begin{corollary}\label{estim.nonpower} Let $u\ge 0$ be a solution of equation $\partial_t u +(-\Delta)^{\sigma/2}A(u)=0 $ with initial data $u_0\in L^1(\ren)$.
Let $A$ be a $C^1$
function $\re_+\to \re_+$ with $A(0)=0$ and $A'(u)\ge au^{m-1}$ for some $a>0$, $1\geq m>m_c$,and all $u>0$. Then,
\begin{equation}
u(x,t)\le C_{\sigma,m,N,a} \,\|u_0\|_1^{\sigma\beta}\,t^{-\alpha}\,
\end{equation}
where $\alpha$ and $\beta$ are as before and $C_{\sigma,m,N,a}=(m/a)^\alpha\, F_{\sigma,m,N,1}(0)$.
\end{corollary}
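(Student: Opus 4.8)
The plan is to apply the comparison result for different diffusivities, Theorem \ref{thm.comp.diff.diff}, using the power nonlinearity as a comparison function and then invoking the smoothing estimate already established in Theorem \ref{thm.opt}. First I would introduce the rescaled power nonlinearity $\widetilde{A}(u) = \frac{a}{m}\,u^m$, chosen precisely so that $\widetilde{A}'(u) = a\,u^{m-1} \le A'(u)$ for all $u>0$, which is the hypothesis $A'(u)\ge a u^{m-1}$ rewritten as the order relation $\widetilde{A}\prec A$ in the sense of the Definition preceding Lemma \ref{inversrel}. Since $m \le 1$, the function $\widetilde{A}$ is concave (and smooth, strictly increasing on $\re_+$, vanishing at $0$), so it satisfies all the structural hypotheses required of the ``more concentrated'' nonlinearity in Theorem \ref{thm.comp.diff.diff}.

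Next I would set up the symmetrized comparison problem: take $\widetilde{f}=0$, $\widetilde{u}_0 = u_0^{\#}$, and let $v$ solve $v_t + (-\Delta)^{\sigma/2}\widetilde{A}(v) = 0$ with $v(\cdot,0)=u_0^{\#}$. The hypotheses of Theorem \ref{thm.comp.diff.diff} are met — $\widetilde{A}\prec A$, $u_0^{\#}\prec u_0^{\#}$ trivially, and $0\prec 0$ — so we conclude $u^{\#}(|x|,t)\prec v(|x|,t)$, and in particular $\|u(\cdot,t)\|_\infty \le \|v(\cdot,t)\|_\infty$. It remains to estimate $\|v(\cdot,t)\|_\infty$. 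Here I would observe that $v$ solves the FFDE with nonlinearity $\widetilde{A}(v) = \frac{a}{m}v^m$; a simple rescaling $v(x,t) = \lambda\, w(x, \mu t)$ with $\lambda, \mu$ chosen appropriately reduces this to the pure-power equation $w_t + (-\Delta)^{\sigma/2}w^m = 0$ to which Theorem \ref{thm.opt} applies. Concretely, writing the equation for $v$ as $v_t + \frac{a}{m}(-\Delta)^{\sigma/2}v^m = 0$ and absorbing the constant $a/m$ into a time rescaling $\tau = (a/m)t$, one gets $\|v(\cdot,t)\|_\infty \le F_{\sigma,m,N,1}(0)\,\|u_0\|_1^{\sigma\beta}\,(a t/m)^{-\alpha}$, and since $\alpha = N\beta$ one collects the constant as $C_{\sigma,m,N,a} = (m/a)^\alpha F_{\sigma,m,N,1}(0)$, giving exactly the claimed bound.

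The main technical point to get right — more bookkeeping than genuine obstacle — is the scaling calculation: one must check that the time rescaling $\tau = (a/m)t$ leaves the $L^1$-norm of the data invariant (it does, since it is a rescaling of time only, not of space or amplitude) so that the mass factor $\|u_0\|_1^{\sigma\beta}$ is unaffected, and that the power of the constant $a/m$ that emerges is indeed $\alpha$ and not $\sigma\beta$ or something else; this follows from how $t^{-\alpha}$ transforms. A secondary point is to confirm that the well-posedness and the approximation argument (the data need not a priori be bounded) carry over in this more general setting — this is handled exactly as in the proof of Theorem \ref{thm.opt}, via the approximation argument referenced there to \cite{JLVSmoothing}, together with the fact that the comparison results of Sections 2--3 are stated for $L^1$ data. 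No new analytic input beyond Theorem \ref{thm.comp.diff.diff} and Theorem \ref{thm.opt} is needed.
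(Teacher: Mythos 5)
Your proposal is correct and follows essentially the same route as the paper: both introduce $\widetilde{A}(u)=(a/m)u^m$, invoke Theorem \ref{thm.comp.diff.diff}, and use the time rescaling $t\mapsto (a/m)t$ to extract the constant $(m/a)^{\alpha}F_{\sigma,m,N,1}(0)$. The only cosmetic difference is that the paper compares $u$ directly with the (approximated) Barenblatt solution of the rescaled equation with mass $\|u_0\|_1$, whereas you first compare with the solution having data $u_0^{\#}$ and then apply Theorem \ref{thm.opt} to that solution; the two are equivalent.
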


\begin{proof}
Let us define the function $\widetilde{A}(u)=(a/m)u^{m}$ for all $u\geq0$: then by assumption we have $A\succ\widetilde{A}$, being $\widetilde{A}$ concave. Let
us take then the fundamental solution $v$, with mass $M:=\|u_0\|_1$ to the equation
\[
v_{t}+(-\Delta)^{\sigma/2}\widetilde{A}(v)=0.
\]
Then we have
\[
v(x,t)=U_{\sigma,m,M}\left(x,\frac{a}{m}t\right),
\]
thus the application of Theorem \ref{thm.comp.diff.diff} (up to a suitable approximation of the Dirac delta) gives the result.
\end{proof}

A similar statement follows from Theorem \ref{thm.estimate.Lp}. We leave the detail to the reader.

\section{Estimates for data in Lorentz spaces}

Assume we are still in  the Fast Diffusion regime, i.\,e.,
\begin{equation*}
u_t+(-\Delta)^{\sigma/2}u^m=0, \qquad 0<m<1.
\end{equation*}
Consider now that the initial data are chosen to belong to the
\emph{Marcinkiewicz space} $\mathcal{M}^{p}(\R^{N})$, $p>1$. In that case, for any given Marcinkiewicz norm $M>0$ we may find the \emph{worst} initial data in
the sense of concentration comparison. The answer is: the function that represents the most concentrated data amongst the ones having the same Marcinkiewicz
norm
has the form $u_0(x)= C\,|x|^{-N/p}$, where $C$ is proportional to $M$. Since such data are not in the standard
class
of integrable data used up to now,
we need to employ Theorem 3.1 in \cite{BV2012} to obtain the existence of a unique minimal very weak solution to problem \eqref{eqcauchy.3} with  data $u_{0}$
in
the Marcinkiewicz space. Indeed, we may choose a weight $\varphi$ defined as
\[
\varphi(x)=(1+|x|)^{-\alpha}
\]
with
\[
N<\alpha<N+\frac{\sigma}{m}.
\]
Then the exponent $\alpha$ satisfies the assumption of Theorem 2.2 in \cite{BV2012} and $u_{0}\in L^{1}(\R^{N},\varphi)$, in the sense
that
\[
\int_{\R^{N}}u_{0}(x)\,\varphi(x)dx<\infty.
\]
Then Theorem 3.1 of \cite{BV2012} ensures  the existence of a very weak solution $u(\cdot,t)\in L^{1}(\R^{N},\varphi)$ to equation
\eqref{FPME} in $\R^{N}\times[0,T]$, in the sense that
\[
\int_{0}^{\infty}\int_{\ren}u\frac{\partial\psi}{\partial t}\,dx\,dt-\int_{0}^{\infty}\int_{\ren}u^{m}(-\Delta)^{\sigma/2}\psi\, dxdt=0\
\label{idveryweak}
\]
for all the test functions $\psi\in C_{0}^{\infty}(\ren\times[0,T])$, $T>0$. Moreover, we have that $u$ is continuous in the weighted space, in the sense that
$u\in C([0,T];L^{1}(\R^{N},\varphi\, dx))$. The solution $u$ is called \emph{minimal solution}, because it is constructed as a monotone limit of the sequence
$\left\{u_{n}\right\}$ of
solutions to problem \eqref{eqcauchy.3} with data $u_{0,n}$, with $u_{0,n}\nearrow u_{0}$ and $u_{0,n}\in L^{1}(\ren)\cap L^{\infty}(\ren)$.
According to Theorem 3.2 of \cite{BV2012}, the minimal solution $u$ to \eqref{eqcauchy.3} is unique.

Once these questions are settled, we are able to give an extension of Theorem \ref{Main comparison} for singular data in Marcinkiewicz spaces.

\begin{theorem}[Comparison result with singular data]\label{Marthe}
Assume that $u_{0}\in \mathcal{M}^{p}(\ren)$ with  $p>1$, $m\in (0,1)$ and let $u$ be the unique minimal solution to problem \eqref{eqcauchy.3}. If $v$ is the
minimal solution to the problem
\begin{equation} \label{symmMarc}
\left\{
\begin{array}
[c]{lll}%
v_t+(-\Delta)^{\sigma/2}v^{m}=0  &  & x\in\R^{N}\,, \ t>0,%
\\[6pt]
v(x,0)=\dfrac{\|u_{0}\|_{\mathcal{M}_{p}}}{|x|^{N/p}} &  & x\in\R^{N},
\end{array}
\right. %
\end{equation}
then for all $t>0$ we have
\[
u^\#(|x|,t)\prec v(|x|,t).\label{conccomp}
\]
\end{theorem}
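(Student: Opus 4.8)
The plan is to reduce the statement to the concentration comparison already available for integrable data (Theorem \ref{Main comparison}, or rather its Corollary \ref{corollarycomp}) by a monotone approximation argument, exploiting the fact that both $u$ and $v$ are \emph{minimal} solutions, i.e. increasing limits of solutions with truncated $L^1\cap L^\infty$ data. First I would set $U_0(x):=\|u_0\|_{\mathcal M_p}\,|x|^{-N/p}$, the data of problem \eqref{symmMarc}, and record the key pointwise fact that $U_0$ is radially symmetric decreasing and, among all functions with Marcinkiewicz norm $\le\|u_0\|_{\mathcal M_p}$, is the most concentrated: indeed from characterization \eqref{Marcknm2} one has $\mu_{u_0}(k)\le(\|u_0\|_{\mathcal M_p}/k)^p=\mu_{U_0}(k)$ for all $k>0$, hence $u_0^\#\le U_0$ pointwise and in particular $u_0^\#\prec U_0$.

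Next I would introduce the approximating sequences. Pick $u_{0,n}\in L^1(\ren)\cap L^\infty(\ren)$ with $u_{0,n}\nearrow u_0$ pointwise a.e., and let $u_n$ be the (unique, integrable) solution of \eqref{eqcauchy.3} with data $u_{0,n}$; by construction $u_n\nearrow u$. Similarly, one can choose radially decreasing $v_{0,n}\in L^1(\ren)\cap L^\infty(\ren)$ with $v_{0,n}\nearrow U_0$ — e.g. the truncations $v_{0,n}=\min\{U_0,n\}\cdot\chi_{\{|x|<n\}}$, each of which is radially symmetric decreasing and integrable — and let $v_n$ be the corresponding solutions, so $v_n\nearrow v$. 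The crucial point is then to apply Corollary \ref{corollarycomp} at the level of the approximants: since the $v_{0,n}$ are radially decreasing, it remains to check that $u_{0,n}^\#\prec v_{0,n}$ for $n$ large. This is where a little care is needed, because $u_{0,n}^\#\prec u_0^\#\prec U_0$ does not immediately give comparison with the truncated $v_{0,n}$. I would handle this by using the freedom in the choice of the two sequences: choose $u_{0,n}$ so that $\mu_{u_{0,n}}(k)\le\mu_{v_{0,n}}(k)$ for all $k$, for instance by first fixing the radial truncations $v_{0,n}$ and then taking $u_{0,n}:=\min\{u_0,n\}\cdot\chi_{B_{R_n}}$ with $R_n$ chosen large enough (and noting $u_0\in L^1(B_R,dx)$ for each fixed $R$ since $N/p<N$ forces local integrability once $p>1$ — more precisely $u_0\in L^1_{loc}$ by definition of $\mathcal M^p$). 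A cleaner route: observe $u_{0,n}^\#\le u_0^\#\le U_0$ and $v_{0,n}\nearrow U_0$, so for each \emph{fixed} $n$, $u_{0,n}^\#$ being bounded and compactly supported (it is $\le n$ and supported where $u_{0,n}>0$), one has $u_{0,n}^\#\le v_{0,k}$ pointwise for $k=k(n)$ large; passing to a subsequence $v_{0,k(n)}$, Corollary \ref{corollarycomp} yields $u_n^\#(\cdot,t)\prec v_{k(n)}(\cdot,t)$ for all $t>0$.

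Finally I would pass to the limit $n\to\infty$. Since $\prec$ is defined by the inequalities $\int_{B_R}u_n^\#\le\int_{B_R}v_{k(n)}$ for every $R>0$, and $u_n\nearrow u$, $v_{k(n)}\nearrow v$ monotonically, the monotone convergence theorem gives $\int_{B_R}u^\#(\cdot,t)\le\int_{B_R}v(\cdot,t)$ for every $R>0$ and every $t>0$ — here I also use that rearrangement is continuous under monotone a.e. convergence, so $u_n^\#\nearrow u^\#$. This is exactly $u^\#(|x|,t)\prec v(|x|,t)$, completing the proof. The main obstacle, as indicated, is the bookkeeping that makes the approximating data genuinely comparable in the sense of $\prec$ at each finite stage — one must interleave the two truncation sequences correctly — together with checking that the minimal-solution construction of \cite{BV2012} is compatible with this interleaving (in particular that it does not depend on the specific approximating sequence, which is guaranteed by the uniqueness statement in Theorem 3.2 of \cite{BV2012}). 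Everything else is a routine limiting argument on the monotone quantities $\int_{B_R}(\cdot)\,dx$.
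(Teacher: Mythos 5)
Your proposal is correct and follows the same basic strategy as the paper: monotone approximation by $L^1\cap L^\infty$ data, application of the integrable-data comparison theorem at each finite stage, and passage to the limit by monotone convergence, the whole argument hinging on the elementary pointwise bound $u_0^\#(x)\le \|u_0\|_{\mathcal{M}_{p}}\,|x|^{-N/p}$. The one structural difference lies in how the second comparison is organized. The paper factors the argument through an intermediate object: Step 1 compares $u$ with the minimal solution $\widetilde v$ of the problem with symmetrized data $u_0^\#$ (applying Theorem \ref{Main comparison} to the approximants $u_{0,n}$ and $u_{0,n}^\#$ and then taking monotone limits), and Step 2 compares $\widetilde v$ with $v$ by the ordinary pointwise comparison principle for minimal solutions (Theorem 3.2 of \cite{BV2012}), since these solve the same radial problem with pointwise-ordered radial data. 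You instead avoid introducing $\widetilde v$ by interleaving the two truncation sequences so that $u_{0,n}^\#\le v_{0,k(n)}$ pointwise, and then apply Corollary \ref{corollarycomp} directly between $u_n$ and $v_{k(n)}$; your verification that such a $k(n)$ exists (using that $u_{0,n}^\#$ is bounded and compactly supported while the radial truncations increase to $U_0$) is sound, and the independence of the limit $v$ from the choice of approximating sequence is indeed guaranteed by the uniqueness of the minimal solution. The paper's two-step version buys a cleaner argument with no diagonal bookkeeping, at the price of invoking one extra ingredient (the comparison principle for minimal solutions); yours is marginally more self-contained but needs the careful choice of $k(n)$ you describe. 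Both are complete.
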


\begin{proof}
The proof is divided in two steps. \smallskip\\
\noindent {\sc Step 1.} Let us choose a sequence $u_{0,n}\in L^{1}(\ren)\cap L^{\infty}(\ren)$ such that $u_{0,n}\nearrow u_{0}$.
Let $u_{n}$ be the solution to problem \eqref{eqcauchy.3} with data $u_{0,n}$, so that by the comparison principle
\[
u_{n}\nearrow u.
\]
By the properties of rearrangements we have
\[
u_{n}^{\#}\nearrow u^{\#}
\]
in $\ren\times [0,\infty)$. Assume that $\widetilde{v_{n}}$ is the solution to the problem
\begin{equation*}
\left\{
\begin{array}
[c]{lll}%
v_t+(-\Delta)^{\sigma/2}v^{m}=0  &  & x\in\R^{N}\,, \ t>0,%
\\[6pt]
v(x,0)=u_{0,n}^{\#}(x) &  & x\in\R^{N}.
\end{array}
\right. %
\end{equation*}
Then the function $\widetilde{v}$ defined as the monotone limit of $\widetilde{v_{n}}$, that is
\[
\widetilde{v_{n}}\nearrow \widetilde{v}
\]
is the unique minimal solution to the problem
\begin{equation}\label{verysym}
\left\{
\begin{array}
[c]{lll}%
v_t+(-\Delta)^{\sigma/2}v^{m}=0  &  & x\in\R^{N}\,, \ t>0,%
\\[6pt]
v(x,0)=u_{0}^{\#}(x) &  & x\in\R^{N}.
\end{array}
\right. %
\end{equation}
By Theorem \ref{Main comparison} we get
\[
u_{n}^\#(|x|,t)\prec \widetilde{v_{n}}(|x|,t),
\]
so Lebesgue's monotone convergence theorem allows to pass to the limit and get
\begin{equation}
u^\#(|x|,t)\prec \widetilde{v}(|x|,t).\label{intermcomp}
\end{equation}\smallskip
\noindent {\sc Step 2.} Since
\[
u_{0}^{\#}(x)\leq\frac{\|u_{0}\|_{\mathcal{M}_{p}}}{|x|^{N/p}}
\]
we obtain by the comparison principle (see Theorem 3.2 of \cite{BV2012})
\[
\widetilde{v}(|x|,t)\leq v(|x|,t)
\]
and by \eqref{intermcomp} the result follows.
\end{proof}

\begin{remark}\label{extgenA}
\emph{In the same spirit of Corollary \ref{estim.nonpower}, we could extend the estimate of Theorem \ref{Marthe} to the solutions of an equation of the form
\begin{equation}
u_t + (-\Delta)^{\sigma/2} A(u)=0\label{gennonl}
\end{equation}
when $A$ is not a power function. Indeed, assume that $A'(u)\ge au^{m-1}$ for some $a>0$, $1\geq m>m_c$, and all $u>0$ and choose the nonnegative datum $u_0\in
L^{1}(\ren)\cap L^{p}(\ren)$ for $p>\max\left\{1,\widetilde{p}\right\}$ where
\begin{equation}
\widetilde{p}=\frac{N(1-m)}{\sigma}\label{critcexp}
\end{equation}
is the so called \emph{critical exponent}. Then an existence result of the forthcoming paper \cite{pqrv4} assures the existence of a unique weak solution $u$ (in a suitable sense) to
equation \eqref{gennonl} with data $u_0$, which is also a classical solution. Then Theorem \ref{thm.comp.diff.diff} gives
\[
u^{\#}\left(|x|,t\right)\prec \widetilde{v}\left(|x|,\frac{a}{m}t\right)\,,
\]
where $\widetilde{v}$ solves \eqref{verysym}, while
\[
\widetilde{v}(|x|,t)\leq v(|x|,t)\,,
\]
where $v$ solves \eqref{symmMarc},
thus}
\begin{equation}\label{estimgenA}
u^{\#}\left(|x|,t\right)\prec v\left(|x|,\frac{a}{m}t\right).
\end{equation}
\end{remark}

A really interesting feature of the solutions to problem  \eqref{symmMarc} is that they exhibit the property of self similarity, as the following result shows.

\begin{proposition} \label{prop.marc} With same $\sigma, m, N$ as before, let $U_p$ be  the unique, minimal very weak solution to \eqref{eqcauchy.3} with data
$u_{0}(x)=|x|^{-N/p}$, $p>\max\left\{1,\widetilde{p}\right\}$, where $\widetilde{p}$ is given in \eqref{critcexp}. Then $U_p$ has the form
$$
U_p(x,t)=t^{-\alpha_{p}}F_p(x\,t^{-\beta_{p}})
$$
with  exponents $\beta_{p}$, $\alpha_{p}$ defined by
\begin{equation}
\beta_{p}=\frac{p}{N(m-1)+\sigma p},\,\,\alpha_{p}=\frac{N}{p}\beta_{p}.\label{expLorentz}
\end{equation}
Moreover, the profile $F_p(\xi)$ is  bounded, radially symmetric, and it  behaves like $|\xi|^{-N/p}$ as $|\xi|\rightarrow\infty$.
\end{proposition}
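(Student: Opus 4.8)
The plan is to exploit the scale invariance of both the equation \eqref{eqcauchy.3} and of the datum $u_0(x)=|x|^{-N/p}$, together with the uniqueness of the minimal very weak solution established just above. First I would identify the one‑parameter group of symmetries of $u_t+(-\Delta)^{\sigma/2}u^m=0$: if $u$ is a solution, then for every $\lambda>0$ so is $u^\lambda(x,t):=\lambda^{N/p}\,u(\lambda x,\lambda^{1/\beta_p}t)$, the exponent $N/p$ in the prefactor being dictated by the homogeneity of $u_0$ (namely $\lambda^{N/p}u_0(\lambda x)=u_0(x)$), and the time‑dilation exponent being then forced, by requiring the two terms $u_t$ and $(-\Delta)^{\sigma/2}u^m$ to scale identically, to be $\lambda^{N(m-1)/p+\sigma}=\lambda^{(N(m-1)+\sigma p)/p}=\lambda^{1/\beta_p}$. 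Since $u_0$ is invariant under $u_0\mapsto\lambda^{N/p}u_0(\lambda\,\cdot)$, if $u_{0,n}\nearrow u_0$ with $u_{0,n}\in L^1(\ren)\cap L^\infty(\ren)$ then also $\lambda^{N/p}u_{0,n}(\lambda\,\cdot)\nearrow u_0$, and the associated approximating solutions converge monotonically to $u^\lambda$; hence $u^\lambda$ is again the minimal very weak solution with datum $u_0$, and by uniqueness
\[
U_p(x,t)=\lambda^{N/p}\,U_p(\lambda x,\lambda^{1/\beta_p}t)\qquad\text{for all }\lambda>0.
\]

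Choosing $\lambda=t^{-\beta_p}$ in this identity gives $U_p(x,t)=t^{-\alpha_p}F_p(x\,t^{-\beta_p})$ with $F_p(\xi):=U_p(\xi,1)$ and $\alpha_p=(N/p)\beta_p$, which is exactly the self‑similar form and the exponents \eqref{expLorentz}; radial symmetry of $F_p$ follows from the same uniqueness argument applied to rotations, since $U_p(R\,\cdot,\cdot)$ solves with datum $u_0(R\,\cdot)=u_0$ for every $R\in O(N)$ and therefore coincides with $U_p$. For the tail of the profile I would use that $U_p(\cdot,t)\to u_0$ as $t\to0^+$ — in $L^1(\ren,\varphi\,dx)$, and, by the local regularity/stability theory for the fractional fast diffusion equation (see \cite{BV2012,pqrv4}), locally uniformly on $\ren\setminus\{0\}$, where $u_0$ is continuous. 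Fixing $x\neq0$ and setting $\xi=x\,t^{-\beta_p}$ — so that $t\to0^+$ forces $|\xi|\to\infty$, using $\beta_p>0$ in the assumed range $p>\widetilde p$ — the self‑similar form turns $U_p(x,t)\to|x|^{-N/p}$ into $t^{-\alpha_p}F_p(\xi)\to|x|^{-N/p}$; substituting $t^{-\alpha_p}=(|\xi|/|x|)^{\alpha_p/\beta_p}=(|\xi|/|x|)^{N/p}$ this reads $|\xi|^{N/p}F_p(\xi)\to1$, and by radiality $F_p(\xi)\sim|\xi|^{-N/p}$ as $|\xi|\to\infty$; in particular $F_p$ is bounded away from the origin.

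The remaining and most delicate point is boundedness of $F_p=U_p(\cdot,1)$ near the origin, precisely where the datum $|x|^{-N/p}$ is itself unbounded and where no global $L^1$ bound is available. Here I would invoke the local upper ($L^1_{loc}$‑into‑$L^\infty$) smoothing estimates for the fractional fast diffusion equation from \cite{BV2012}: for every $R>0$ these provide a bound for $U_p$ on $B_R\times\{1\}$ in terms of $\int_{B_{2R}}u_0\,dx$, which is finite because $|x|^{-N/p}\in L^1_{loc}(\ren)$ for $p>1$. Combining this near‑origin bound with the decay at infinity established in the previous step yields $F_p\in L^\infty(\ren)$, which completes the proof.
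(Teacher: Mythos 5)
Your derivation of the self-similar form is essentially the paper's: you identify the scaling group, check that it preserves the datum $|x|^{-N/p}$, push it through the monotone approximation $u_{0,n}\nearrow u_0$, and conclude $U_p=\lambda^{N/p}U_p(\lambda x,\lambda^{1/\beta_p}t)$ from uniqueness of the minimal solution; the choice $\lambda=t^{-\beta_p}$ then gives the form and the exponents \eqref{expLorentz}. Your rotation argument for radial symmetry and your derivation of the tail $F_p(\xi)\sim|\xi|^{-N/p}$ from continuity at $t=0^+$ away from the origin are acceptable variants of what the paper does by citation.

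The genuine gap is the boundedness of $F_p$ near the origin. You invoke a local $L^1_{loc}$-into-$L^\infty$ smoothing estimate, bounding $U_p$ on $B_R\times\{1\}$ by $\int_{B_{2R}}u_0\,dx$. No such estimate holds in the generality the proposition requires: the statement covers all of $0<m<1$, hence also the subcritical range $m<m_c$ where $\widetilde p>1$. In that range local smoothing from $L^1_{loc}$ fails, and the paper itself exhibits the counterexample: the extinction solution $U(x,t;T)=C_1(T-t)^{1/(1-m)}|x|^{-\sigma/(1-m)}$ of \eqref{solvanish} has locally integrable initial data (since $\sigma/(1-m)<N$ exactly when $m<m_c$) yet is unbounded near the origin for every $t<T$. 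This is precisely why the hypothesis $p>\widetilde p$ appears in the proposition; your near-origin argument never uses it, which is the tell-tale sign it cannot work. The paper instead observes that the profile satisfies the stationary equation $(-\Delta)^{\sigma/2}F^m=\alpha_pF+\beta_p\,\xi\cdot\nabla F$ and that $F\in L^q(B_1(0))$ for some $q\in(\widetilde p,p)$ (because $U_p(\cdot,t)\in\mathcal M_p$), and then runs a Moser iteration built on Stroock--Varopoulos and Nash--Gagliardo--Nirenberg inequalities, for which starting integrability \emph{above} the critical exponent $\widetilde p$ is essential. You would need to supply an argument of this type (or restrict to $m>m_c$, where local smoothing is available) to close the proof.
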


\begin{proof} We follow the ideas of \cite{JLVSmoothing}.
Let us consider the function
\[
\overline{U}_{p}(x,t):=A U_{p}(Bx,Ct).
\]
for some positive constant $\lambda,\,A,\,B$. Then $\overline{U}_p$ solves \eqref{eqcauchy.3} with the same initial data $u_{0}(x)=|x|^{-N/p}$ if and
only if $A,\,B,\,C$ are linked by means of the relations
\[
A=B^{N/p},\quad A^{m-1} B^{\sigma}=C.
\]
Then for any fixed $C>0$, choosing the constants $A,\,B$ according to the equations above, we can write
\[
\overline{U}_{p}(x,t)=C^{\alpha_{p}}U_{p}(C^{\beta_{p}}x,Ct),
\]
with the exponents $\beta_{p}$, $\alpha_{p}$ defined by relations \eqref{expLorentz}.
Now, let us prove that $\overline{U}_{p}$ is another solution obtained by approximating the same data $u_{0}(x)=|x|^{-N/p}$. To this aim, assume that
$u_{0,n}\in L^{1}(\ren)\cap L^{\infty}(\ren)$ is such that $u_{0,n}\nearrow u_{0}$ and let $u_{n}$ be the sequence of solutions to problem
\eqref{eqcauchy.3} with data $u_{0,n}$. For what we explained before, we have
\[
u_{n}\nearrow U_{p}.
\]
Now define the sequence
\[
\overline{u}_{0,n}=C^{\alpha_{p}}\,u_{0,n}(C^{\beta_{p}}x).
\]
Clearly
\[
\overline{u}_{0,n}\nearrow u_{0}.
\]
Then we point out that the sequence of solutions $\overline{u}_{n}$ to \eqref{eqcauchy.3} with data $\overline{u}_{0,n}$ are exactly the functions
\[
\overline{u}_{n}(x,t):=C^{\alpha_{p}}\,u_{n}(C^{\beta_{p}}x,Ct)
\]
and that
\[
\lim_{n\rightarrow\infty}\overline{u}_{n}(x,t)=C^{\alpha_{p}}U_{p}(C^{\beta_{p}}x,Ct)=\overline{U}_{p}(x,t).
\]
By the uniqueness of the minimal solution (Theorem 3.2 of \cite{BV2012}) we find
\[
U_{p}(x,t)=\overline{U}_{p}(x,t)=C^{\alpha_{p}}U_{p}(C^{\beta_{p}}x,Ct).
\]
Therefore, following \cite{JLVSmoothing}, with a suitable choice of the constant $C$ we find that the solution $U_{p}(x,t)$ takes the selfsimilar form
\[
U_{p}(x,t)=t^{-\alpha_{p}}F_{p}(x\,t^{-\beta_{p}}).
\]
Since the data $u_0$ is radially decreasing, the profile $F_{p}(\xi)$ inherits the same property (see \cite{vazBaren}); moreover $F_{p}(\xi)$ decreases like
$|\xi|^{-N/p}$ as $|\xi|\rightarrow\infty$ (see \cite{BV2012}) .
Now in order to prove that $F_{p}(\xi)$ is bounded, it will be enough to notice that $F_{p}(\xi)$ satisfies the fractional elliptic equation
\begin{equation}
(-\Delta)^{\sigma/2}F^{m}=\alpha_{p}F+\beta_{p}\,\xi\cdot\nabla F\label{fractellip}
\end{equation}
and that $F\in L^{q}(B_{1}(0))$ for some $q\in (\widetilde{p},p)$, because $U_{p}(\cdot,t)\in \mathcal{M}_{p}(\ren)$ for all $t\geq0$. Then the argument
will
be achieved by the proposition that follows.\end{proof}
\begin{proposition}
Let $p>\max\left\{1,\widetilde{p}\right\}$ and assume that $F=F(|\xi|)$ is any radially decreasing solution to equation \eqref{fractellip}
and
that $F\in L^{q}(B_{1}(0))$ for $q\in (\widetilde{p},p)$.
Then $F$ is bounded.
\end{proposition}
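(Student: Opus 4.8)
The plan is to reduce the statement to boundedness of $F$ near the origin and then run a Moser iteration on the profile equation \eqref{fractellip}. First, since $F=F(|\xi|)$ is radially decreasing, for every $\rho\in(0,1)$ we have $F\ge F(\rho)$ on $B_\rho(0)$, so $\omega_N\rho^N F(\rho)^q\le\|F\|_{L^q(B_1(0))}^q$; hence $F$ is already bounded on $B_1(0)\setminus B_\rho(0)$ for each $\rho>0$, and it only remains to bound $F$ on, say, $B_{1/2}(0)$.

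The next step is to exploit the sign of the drift term. For $0\le\varphi\in C_c^\infty(B_1)$ one has, in the distributional sense, $\langle\xi\cdot\nabla F,\varphi\rangle=-\langle F,\operatorname{div}(\xi\varphi)\rangle\le0$ because $F$ is radially decreasing (reduce to radial $\varphi$, integrate by parts, and use that $F(r)r^N\to0$ as $r\to0$, which follows from $F\in L^q(B_1)$). Since in the fast diffusion range $\beta_p=p/(\sigma p-N(1-m))>0$ — the denominator being positive exactly because $p>\widetilde p$ — the function $w:=F^m$ satisfies the differential inequality
\[
(-\Delta)^{\sigma/2}w\ \le\ \alpha_p\,w^{1/m}\qquad\text{in }B_1(0),
\]
i.e. $w$ is a fractional subsolution of $(-\Delta)^{\sigma/2}w=\alpha_p w^{1/m}$, and $w\in L^{q/m}(B_1(0))$ by hypothesis.

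Now I would run the standard Moser iteration for this subsolution, localized inside $B_1(0)$. Fixing cutoffs $\eta\in C_c^\infty(B_1)$ that concentrate towards $B_{1/2}$, one tests the inequality against $\eta^2 w_M^{\beta}$ (with a truncation $w_M=\min\{w,M\}$ to ensure integrability, removed at the end), and combines the C\'ordoba--C\'ordoba pointwise inequality for $(-\Delta)^{\sigma/2}$ with the fractional Sobolev inequality $\|v\|_{L^{2N/(N-\sigma)}}\le C\|(-\Delta)^{\sigma/4}v\|_{L^2}$. The commutator and tail terms produced by localizing the nonlocal operator are controlled by $\|F\|_{L^q(B_1)}$ and by the bound of the first step on $B_1\setminus B_{1/2}$, hence are of lower order. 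Because the right-hand side contributes only $\alpha_p\int\eta^2 w^{\beta+1/m}$, if currently $w\in L^{s_k}$ on a ball one chooses $\beta=s_k-\tfrac1m$, so that $\int\eta^2 w^{\beta+1/m}\le\|w\|_{L^{s_k}(B_1)}^{s_k}$ is merely a constant, and the left-hand side delivers $w\in L^{s_{k+1}}$ on a slightly smaller ball with
\[
s_{k+1}=\chi\Big(s_k-\tfrac{1-m}{m}\Big),\qquad \chi=\tfrac{N}{N-\sigma}>1.
\]
The unique fixed point of this recursion is $s_*=\tfrac{\chi}{\chi-1}\cdot\tfrac{1-m}{m}=\tfrac{N(1-m)}{\sigma m}$, the scaling-critical integrability exponent for the displayed inequality, and $m\,s_*=\widetilde p$. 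The hypothesis $q>\widetilde p$ states precisely that the initial exponent $s_0=q/m$ lies \emph{strictly above} $s_*$; since $\chi>1$, the recursion then forces $s_k-s_*=\chi^k(s_0-s_*)\to+\infty$, and a final Moser sup-bound on $B_{1/2}$ gives $w\in L^\infty(B_{1/2})$, i.e. $F\in L^\infty(B_{1/2})$. Combined with the first step, $F\in L^\infty(\ren)$, as claimed.

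The main obstacle is technical rather than conceptual. First, the C\'ordoba--C\'ordoba (Stroock--Varopoulos) inequality and the testing procedure must be justified for the merely weak profile $F$: the cleanest route is to prove the one-step estimate for the smooth approximants $u_n$ furnished by Proposition~\ref{prop.marc} (or for truncations of $F$) and then pass to the limit. Second, one must verify carefully that the tail/commutator contributions generated by the nonlocality of $(-\Delta)^{\sigma/2}$ under localization are genuinely subordinate to the main terms — here the boundedness of $F$ away from the origin, established in the first step, is what makes them harmless. Apart from this, the only point that needs attention is the bookkeeping showing that $q>\widetilde p$ is exactly the threshold for the iteration to escape the critical fixed point $s_*$ toward $+\infty$; this is the same critical-exponent mechanism already underlying the results of \cite{BV2012,JLVSmoothing}, and with it in hand the iteration is entirely routine.
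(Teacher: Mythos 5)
Your proposal is correct and follows essentially the same route as the paper: a Moser iteration on the profile equation \eqref{fractellip} whose exponent recursion has $\widetilde{p}$ as its fixed point, so that $q>\widetilde{p}$ is precisely the condition forcing the iterates to diverge to $+\infty$ and yield an $L^\infty$ bound (your fixed point $s_*=N(1-m)/(\sigma m)$ for $w=F^m$ is just $\widetilde p/m$, matching the paper's $A=q-\frac{(1-m)N}{\sigma}>0$). The only differences are in the bookkeeping: you discard the drift $\beta_p\,\xi\cdot\nabla F$ by its sign and then localize with cutoffs and the fractional Sobolev inequality (gain factor $N/(N-\sigma)$), whereas the paper keeps the drift, integrates it by parts over $B_1(0)$, absorbs the resulting boundary term via the radial monotonicity $\int_{B_1}F^q\ge\omega_N F^q(1)$, and then invokes the Stroock--Varopoulos and Nash--Gagliardo--Nirenberg inequalities of \cite{pqrv2} directly on $\int_{B_1}F^{q-1}(-\Delta)^{\sigma/2}F^m$ (gain factor $2N/(2N-\sigma)$) without introducing cutoff commutator terms.
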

\begin{proof}
The proof is based upon an application of classical Moser's iteration techniques.
Let us multiply equation \eqref{fractellip} by the the test function $F^{q-1}$ and integrate over the ball $B_{1}(0)$. Integrating by parts,
we find
\begin{align}
&\int_{B_{1}(0)}F^{q-1}(-\Delta)^{\sigma/2}F^{m}d\xi=\alpha_{p}\int_{B_{1}(0)}F^{q}d\xi-\frac{N\beta_{p}}{q}\int_{B_{1}(0)}F^{q}d\xi+\frac{\beta_{p}}
{q}N\omega_{N}F^{q}(1)\label{ellfraceq1}\\
&\leq \alpha_{p}\int_{B_{1}(0)}F^{q}d\xi+\frac{\beta_{p}}
{q}N\omega_{N}F^{q}(1).\nonumber
\end{align}
Note that by the radial monotonicity of $F$ we also have
\[
\int_{B_{1}(0)}F^{q}d\xi\geq\omega_{N}F^{q}(1)
\]
and inserting this inequality in \eqref{ellfraceq1} we find
\begin{equation}
\int_{B_{1}(0)}F^{q-1}(-\Delta)^{\sigma/2}F^{m}d\xi\leq C\int_{B_{1}(0)}F^{q}d\xi.\label{ellfraceq2}
\end{equation}
for some positive constant $C$.
Now it is time to use heavy machinery: indeed, according to Theorem 8.2 in \cite{pqrv2}, applying the bounded versions of
\emph{Strook-Varopoulos's} and \emph{Nash-Gagliardo-Nirenberg type} inequalities (see Lemma 5.1 and 5.3 in \cite{pqrv2}) we obtain
\begin{equation}
C\|F\|^{2q}_{q}\geq\frac{4m(q-1)}{(q-1+m)^2}\|F\|^{2q+m-1}_{r}\label{ellfraceq3}
\end{equation}
where
\[
r=\frac{N}{2N-\sigma}(2q+m-1).
\]
Now we start the iterations. We set $q_{0}=q$ and
\[
q_{k+1}=\rho\(q_{k}+\frac{m-1}{2}\),\quad \rho=\frac{2N}{2N-\sigma}.
\]
We notice that, by our assumptions, the sequence $\left\{q_{k}\right\}$ is defined through
\[
q_{k}=A(\rho^{k}-1)+q\,\quad A=q-\frac{(1-m)N}{\sigma}>0
\]
so that $q_{k}>q_{k+1}$ and $\lim_{k\rightarrow\infty}q_{k}=+\infty$. Then inequality \eqref{ellfraceq3} provides
\begin{equation*}
\frac{4mq_{k}}{(q_{k-1}-1+m)^2}\|F\|^{2q_{k}+m-1}_{q_{k+1}}\leq N\beta \|F\|^{2q_{k}}_{q_{k}}
\end{equation*}
from which
\begin{equation}
\|F\|_{q_{k+1}}\leq \mathsf{L}^{\frac{\rho}{2q_{k+1}}}\|F\|_{q_{k}}^{\frac{\rho
q_{k}}{q_{k+1}}}\label{ellfraceq4}
\end{equation}
for some positive constant $\mathsf{L}$. Then, if we set
\[
U_{k}=\|F\|_{q_{k}}
\]
for $k\geq0$, inequality \eqref{ellfraceq4} yields
\begin{equation}
U_{k}\leq L^{\theta_{k}} U_{0}^{\nu_{k}}\label{ellfraceq5}
\end{equation}
where the exponents converge to the following limits, as $k\rightarrow\infty$:
$$
\theta=\frac{1}{2q_{k}}\sum_{j=1}^{k}\rho^{j}\rightarrow \frac{N}{A},\quad\nu_{k}=\frac{\rho^{k}q}{q_{k}}\rightarrow\frac{q}{A}.$$
Then passing to the limit in \eqref{ellfraceq5} one finally finds
\[
\|F\|_{\infty}=\lim_{k\rightarrow\infty}U_{k}\leq \mathsf{C}_{1} U_{0}^{\frac{q}{A}}=\mathsf{C}_{1}\|F\|_{q}^{\frac{\sigma q}{\sigma q-(1-m)N}}.
\]
\end{proof}
Now we wish to use the scaling argument as in \cite{JLVSmoothing} to better highlight the dependence of such self similar profile on the
Marcinkiewicz
norm $M$, that is when we take data of
the form $u_{0}=M/|x|^{N/p}$. We know that the self-similar profile $F_{p}$ of the solution $U_{p}$ constructed in Proposition \eqref{prop.marc} must satisfy
the
fractional elliptic equation \eqref{fractellip}.
This implies that the rescaled profile
\[
\hat{F}_{p}(\xi)=\lambda^{\mu}F_{p}(\lambda \xi)
\]
satisfies the same equation if
\begin{equation}
\mu=\frac{\sigma}{1-m}.\label{mu}
\end{equation}
Now let us consider the function
\begin{equation}
\hat{U}_{p}(x,t)=t^{-\alpha_{p}}\hat{F}_{p}(x\,t^{-\beta_{p}})=\lambda^{\mu} U_{p}(\lambda x,t).\label{selfMarc}
\end{equation}
Then it is readily seen that $\hat{U}_{p}$ satisfies the equation \eqref{FPME} and takes the initial data
\begin{equation}
u_{0}(x)=\frac{M}{|x|^{N/p}}\label{singdata}
\end{equation}
if and only if
\[
\lambda^{\mu-\frac{N}{p}}=M
\]
\emph{i.e.} if
\begin{equation}
\lambda=M^{(1-m)\beta_{p}}.\label{lamb}
\end{equation}
Besides with the choices \eqref{mu}, \eqref{lamb} of $\mu,\,\lambda$, the self-similar solution \eqref{selfMarc} is actually the minimal solution to
problem
\eqref{eqcauchy.3}
with the singular data \eqref{singdata}. Indeed, choosing the usual approximating sequence $u_{0,n}\nearrow |x|^{-N/p}$ and the corresponding
solutions $u_{n}$ such that
\[
u_{n}\nearrow U_{p}
\]
we define the initial data
\[
\hat{u}_{0,n}:=\lambda^{\mu} u_{0,n}(\lambda x)\nearrow M |x|^{-N/p},
\]
then the functions
\[
\hat{u}_{n}(x,t):=\lambda^{\mu} u_{n}(\lambda x,t)
\]
solve \eqref{FPME} taking the  data $\hat{u}_{0,n}$ and
\[
\lim_{n\rightarrow \infty}\hat{u}_{n}(x,t)=\lambda^{\mu} U_{p}(x,t)=\hat{U}_{p}(x,t).
\]


All this information allows to derive the following result, extending the smoothing effect in the Marcinkiewicz spaces $\mathcal{M}_{p}(\ren)$.

\begin{theorem}\label{Marcink}
Let  $p>\max\left\{1,\widetilde{p}\right\}$, choose the data $u_{0}\in \mathcal{M}_{p}(\ren)$, $u_{0}\geq0$ and let $u$ be the solution to
problem
\eqref{eqcauchy.3}. Then for all
$t>0$ we have $u(t)\in L^{\infty}(\ren)$ and
\begin{equation}
u(x,t)\leq C \,\|u_0\|_{\mathcal{M}_{p}}^{\sigma\beta_{p}}\,t^{-\alpha_{p}}\label{M_p-L}
\end{equation}
for a positive constant $C=C(m,N,p)$, the exponents $\alpha_p$, $\beta_p$ being defined in \eqref{expLorentz}.
\end{theorem}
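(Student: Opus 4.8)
The plan is to deduce Theorem \ref{Marcink} from the concentration comparison of Theorem \ref{Marthe} together with the explicit self-similar structure of the solution with singular data established above. First I would invoke Theorem \ref{Marthe}: since $u_0\in\mathcal{M}_{p}(\ren)$ with $p>\max\{1,\widetilde{p}\}\geq1$ and $m\in(0,1)$, the minimal solution $u$ of \eqref{eqcauchy.3} satisfies
\[
u^{\#}(|x|,t)\prec v(|x|,t)\qquad\text{for all }t>0,
\]
where $v$ is the minimal solution of \eqref{symmMarc} with the explicit data $v(x,0)=\|u_0\|_{\mathcal{M}_{p}}/|x|^{N/p}$. Because $u^{\#}(\cdot,t)$ and $v(\cdot,t)$ are both rearranged, the inequality following Lemma \ref{lemma1} applied with $p=\infty$ gives $\|u(\cdot,t)\|_{\infty}=\|u^{\#}(\cdot,t)\|_{\infty}\leq\|v(\cdot,t)\|_{\infty}$, so it suffices to bound the right-hand side.

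Next I would identify $v$ with the self-similar solution $\hat{U}_{p}$ constructed above, taking $M=\|u_0\|_{\mathcal{M}_{p}}$. By uniqueness of the minimal very weak solution (Theorem 3.2 of \cite{BV2012}) — which is precisely what was used to pin down $\hat{U}_{p}$ as the minimal solution with data $M|x|^{-N/p}$ — we have
\[
v(x,t)=\hat{U}_{p}(x,t)=t^{-\alpha_{p}}\,\hat{F}_{p}(x\,t^{-\beta_{p}}),\qquad \hat{F}_{p}(\xi)=\lambda^{\mu}F_{p}(\lambda\xi),
\]
with $\mu=\sigma/(1-m)$ and $\lambda=M^{(1-m)\beta_{p}}$. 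Since $F_{p}$, hence $\hat{F}_{p}$, is radially decreasing, $\|v(\cdot,t)\|_{\infty}=t^{-\alpha_{p}}\hat{F}_{p}(0)=t^{-\alpha_{p}}\lambda^{\mu}F_{p}(0)$, and the elementary computation $\lambda^{\mu}=M^{(1-m)\beta_{p}\cdot\sigma/(1-m)}=M^{\sigma\beta_{p}}$ yields
\[
\|v(\cdot,t)\|_{\infty}=F_{p}(0)\,\|u_0\|_{\mathcal{M}_{p}}^{\sigma\beta_{p}}\,t^{-\alpha_{p}}.
\]
Combining with the previous step proves \eqref{M_p-L} with $C=F_{p}(0)$, a constant depending only on $\sigma,m,N,p$.

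The only genuinely non-trivial ingredient — and the place where the work is concentrated — is the finiteness of $C=F_{p}(0)$, i.e. the boundedness of the profile $F_{p}$. This is exactly the content of Proposition \ref{prop.marc} and the subsequent Moser-iteration proposition: one knows a priori only that $U_{p}(\cdot,t)\in\mathcal{M}_{p}(\ren)$, hence $F_{p}\in L^{q}(B_{1}(0))$ for every $q<p$ and in particular for some $q\in(\widetilde{p},p)$ since $p>\widetilde{p}$; the iteration based on the bounded Strook--Varopoulos and Nash--Gagliardo--Nirenberg inequalities then upgrades this to $F_{p}\in L^{\infty}$, so that $F_{p}(0)=\|F_{p}\|_{\infty}<\infty$. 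Once this is in hand the remaining steps are purely algebraic bookkeeping with the self-similarity exponents $\alpha_{p},\beta_{p},\mu,\lambda$. I would also remark, in the spirit of Theorem \ref{thm.opt}, that $C$ is here \emph{computed}, not merely shown finite, and that combining this argument with Remark \ref{extgenA} gives the analogous estimate for the general nonlinearity $A$ (with $t$ replaced by $(a/m)t$).
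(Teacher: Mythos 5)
Your proposal is correct and follows essentially the same route as the paper: apply Theorem \ref{Marthe}, identify the symmetrized solution with the self-similar $\hat{U}_{p}$, and read off $\|\hat U_p(\cdot,t)\|_\infty=t^{-\alpha_p}\|u_0\|_{\mathcal M_p}^{\sigma\beta_p}F_p(0)$, with finiteness of $F_p(0)$ supplied by Proposition \ref{prop.marc} and the Moser iteration. You merely make explicit the steps (the $L^\infty$ comparison via Lemma \ref{lemma1}, the uniqueness argument identifying $v$ with $\hat U_p$, and the exponent computation $\lambda^{\mu}=M^{\sigma\beta_p}$) that the paper leaves implicit.
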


\begin{proof}
As $u_{0}\in \mathcal{M}_{p}(\ren)$ setting $M=\|u_{0}\|_{\mathcal{M}_{p}}$,  by
Theorem \ref{Marthe} we have
\begin{equation*}
\|u(t)\|_{\infty}\leq\|\hat{U}_{p}\|_{\infty}=t^{-\alpha_{p}}M^{\sigma\beta_{p}}F_{p}(0)
\end{equation*}
and the result follows.
\end{proof}

\noindent  {\bf Extensions.} {\bf 1)} According to what we have seen in Section 2.2, for all $f\in L^{p}(\ren)$ we have
\[
\|f\|_{\mathcal{M}_{p}}\leq\|f\|_{L^{p}}
\]
by Theorem \ref{Marcink} the $L^{p}$-$L^{\infty}$ smoothing effect holds for the problem \eqref{eqcauchy.3}, in the sense of replacing the norm
$\|\cdot\|_{\mathcal{M}_{p}}$ with the
$L^{p}$ norm in \eqref{M_p-L}, for all $u_{0}\in L^{p}(\ren)$, for some $p>1$. This result has its
own interest, because for data in $L^p$ with $p>1$ we cannot find any ``worst case function'' that $u$ can be compared with. Besides, we may think to
choose the
data $u_{0}$ in a general Lorentz space $L^{p,q}(\ren)$ for $p,\,q\geq1$ and use the embedding \eqref{embed}, namely the inequality (not optimal again)
\[
\|f\|_{\mathcal{M}{p}}\leq C\|f\|_{L^{p,q}}\quad \forall f\in L^{p,q}(\ren)
\]
in order to obtain by Theorem \ref{Marcink} a more general $L^{p,q}$-$L^{\infty}$ smoothing effect. Of course, also in this case the constants are not
optimal.\\
We remind that the critical
exponent
\[
m_c=\frac{N-\sigma}{N}
\]
plays an important role in the existence of solutions. Indeed, according to \cite[Theorem 8.3]{pqrv2}, we have the existence of a unique \emph{strong} solution
both in
the \emph{supercritical regime} $m>m_{c}$ with $u_{0}\in L^{1}(\ren)$ and in the \emph{subcritical regime} $m\leq m_{c}$ and $u_{0}\in L^{1}(\ren)\cap
L^{p}(\ren)$ with $p>\widetilde{p}$, being $\widetilde{p}$ the critical exponent \eqref{critcexp}.

\medskip

\noindent {\bf 2)} With the same assumption on the nonlinearity $A$ and on the data $u_{0}$ given in Remark \ref{extgenA}, estimate \eqref{estimgenA} and the
construction of the selfsimilar solution $\hat{U}_{p}(x,t)$ provided in \eqref{selfMarc}  yield
\[
u(x,t)\leq C \,\|u_0\|_{\mathcal{M}_{p}}^{\sigma\beta_{p}}\,t^{-\alpha_{p}}
\]
where $C$ is a constant depending on $a,\,m,\,N,\,p$. This result generalizes the smoothing effect contained in \cite{pqrv4}.

\section{Optimal estimates in the subcritical range. Extinction}

We now turn to the problem of finding optimal estimates  in the so-called \emph{subcritical FFD range,}
$m< m_c.$ A main qualitative feature of the theory in this range is the existence of solutions that vanish completely (\emph{i.e.} they extinguish) in finite
time. We want
to use the comparison theorem \ref{Marthe} to provide interesting estimates not only for the size of the solutions to the FFD equation \eqref{FPME} before
extinction, but also for the possible extinction time of such solutions. A special role will be played by the critical exponent $\widetilde{p}=N(1-m)/\sigma$
(cf. \eqref{critcexp}) which  is
larger than 1 for $m<m_c$.

Let us recall some known facts for the standard fast diffusion equation $u_t=\Delta u^m$. A classical result says that for exponents $0<m<(N-2)/N$ in dimension
$N\ge 3$, initial
data that are small enough produce weak solutions that vanish completely
after a finite time $T=T(u_0)>0$. The solutions are smooth, everywhere positive for $0<t<T$. Symmetrization techniques are used in \cite{JLVSmoothing}
to estimate the extinction time in terms of the Marcinkiewicz norm, following the idea that we have explained in the previous section.

In the case of fractional diffusion, $0<\sigma<2$, it has been proved in \cite{pqrv2} that a similar phenomenon of finite time extinction happens for
\begin{equation}0<m<m_{c}\label{range},
\end{equation}
 if the initial data belong to $L^{\widetilde{p}}(\ren)$.

\subsection{Explicit extinction solution}

In the range \eqref{range} of $m$ it is possible to construct an explicit singular solution to
\eqref{FPME}, vanishing at a finite time $T>0$ and belonging to the
Marcinkiewicz space $\mathcal{M}_{\widetilde{p}}\,(\ren)$. It is done as follows:  we look for such a solution as a function having the form
\[
U=c(t)|x|^{-\alpha}
\]
for some $\alpha>0$ and $c(t)>0$; then $U$  satisfies equation \eqref{FPME} if and only if
$\alpha=\sigma/(1-m)$, and
\begin{equation}
c^{\prime}(t)+\kappa(\alpha\,m)\,c^{m}(t)=0\label{eqc}.
\end{equation}
This can be easily derived from the explicit form of the fractional Laplacian $(-\Delta)^{\sigma/2}$ of the function $f=|x|^{-\alpha}$
(see
\cite{vazBaren}):
\[
(-\Delta)^{\sigma/2}|x|^{-\alpha}=\kappa(\alpha)|x|^{-\alpha-\sigma}
\]
where
\begin{equation}
\kappa(\alpha)=2^{\sigma}\frac{\Gamma((N-\alpha)/2)\,\Gamma((\alpha+\sigma)/2)}{\Gamma((N-\alpha-\sigma)/2)\,\Gamma(\alpha/2)}.\label{kappa}
\end{equation}
Checking the sign of all the terms in the expression of $\kappa(\alpha m)$ one realizes that condition \eqref{range} assures $\kappa(\alpha
m)>0$,
hence solving \eqref{eqc} with the initial condition $c(T)=0$ we find
\[
c(t)=\left[(1-m)\kappa\left(\frac{m\sigma}{1-m}\right)\right]^{\frac{1}{1-m}}(T-t)^{\frac{1}{1-m}}.
\]
Therefore, the explicit solution to \eqref{FPME} vanishing at finite time $T>0$ has the form
\begin{equation}\label{solvanish}
U(x,t;T)=\left[(1-m)\kappa\left(\frac{m\sigma}{1-m}\right)\right]^{\frac{1}{1-m}}
(T-t)^{\frac{1}{1-m}}|x|^{-\frac{\sigma}{1-m}}.
\end{equation}
We notice that $U(\cdot,\cdot;T)$ is actually a \emph{very weak} solution to problem \eqref{eqcauchy.3}, with the following initial data in the Marcinkiewicz space
$\mathcal{M}_{\widetilde{p}}\,(\ren)$
\[
u_{0}(x)=C_1\,T^{\frac{1}{1-m}}|x|^{-\frac{\sigma}{1-m}},
\qquad C_1=\left[(1-m)\kappa\left(\frac{m\sigma}{1-m}\right)\right]^{\frac{1}{1-m}}
\]
For the reader's convenience, we recall here the definition of very weak solution used in \cite{BV2012}.

\begin{definition}
A function $u$ is called a very weak solution to problem \eqref{eqcauchy.3} with data $u_{0}\in L^{1}_{loc}(\ren)$ if\\
\noindent $ \bullet$ we have
\begin{equation}u\in C([0,\infty);L^{1}_{loc}(\R^{N}),\,u^{m}\in L^{1}_{loc}((0,\infty);L^{1}(\ren,(1+|x|)^{-(N+\sigma)}dx));\label{funcspac}
\end{equation}\\
\noindent $ \bullet$ the following identity holds
\begin{equation}
\int_{0}^{\infty}\int_{\ren}u\frac{\partial\psi}{\partial t}\,dx\,dt-\int_{0}^{\infty}\int_{\ren}u^{m}(-\Delta)^{\sigma/2}\psi\, dx,dt=0\
\label{idveryweak}
\end{equation}
for all the test functions $\psi\in C_{0}^{\infty}(\ren\times[0,T])$, $T>0$;\medskip\\
\noindent $ \bullet$ $u(\cdot,0)=u_{0}\in L^{1}_{loc}(\ren)$.
\end{definition}
In order to justify that $U(\cdot,\cdot\,;T)=U_{T}$ agrees with such definition, we first observe that \eqref{funcspac} is satisfied because $U$
is
integrable around the origin for all times $t$. Besides, by the construction of $U_{T}$ and condition \eqref{range} we get
\[
\frac{\partial U_{T}}{\partial t},\,(-\Delta)^{\sigma/2} U_{T}\in L^{1}_{loc}(\ren\times(0,T))
\]
then for all the test functions $\psi$ we find
\begin{align*}
&0=\int_{0}^{\infty}\int_{\ren}\psi\frac{\partial U_{T}}{\partial t}+\int_{0}^{\infty}\int_{\ren}\psi(-\Delta)^{\sigma/2}U_{T}^{m}\,
dx,dt\\
&=-\int_{0}^{\infty}\int_{\ren}U_{T}\frac{\partial\psi}{\partial
t}\,dx\,dt+\int_{0}^{\infty}\int_{\ren}U_{T}^{m}(-\Delta)^{\sigma/2}\psi\,
dx\,dt.
\end{align*}

\subsection{Optimal estimates for extinction times}

We can use Theorem \ref{Marthe} to compare a weak solution $u$ to problem \eqref{eqcauchy.3} with the explicit solution \eqref{solvanish}, in order to get an
estimate of the extinction time, exactly like in the  theory involving the classical Laplacian.

\begin{theorem}\label{extinctiontimetheo} Let $\sigma\in (0,2)$ and let $u$ be the solution to \eqref{eqcauchy.3} with nonnegative data $u_{0}\in\mathcal{
M}_{\widetilde{p}}(\ren)$
where we assume that  $0<m<m_c$.
Then $u$ vanishes in a finite time $T(u_0)$ and
\begin{equation}
T(u_0)\leq d(\sigma,m)\|u_{0}\|_{\mathcal{M}_{\widetilde{p}}(\ren)}^{1-m}\label{extinctime}
\end{equation}
where
\[
d(\sigma,m)=\left((1-m)\kappa\left(\frac{m\sigma}{1-m}\right)\right)^{-1},
\]
and the function $\kappa(\alpha)$ is given in \eqref{kappa}.
Moreover, for all $0<t<T$ we have $u(t)\in \mathcal{M}_{\widetilde{p}}(\ren)$ and $u^{\#}(t)\prec U(t;T)$.
\end{theorem}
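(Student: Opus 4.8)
The plan is to combine the singular--data comparison of Theorem~\ref{Marthe} with the explicit extinction solution \eqref{solvanish}, using the latter as the worst case. First I would record the elementary identity $N/\widetilde p=\sigma/(1-m)$, which holds since $\widetilde p=N(1-m)/\sigma$, and note that the hypothesis $m<m_c$ is exactly what makes $\widetilde p>1$, so that Theorem~\ref{Marthe} is applicable with $p=\widetilde p$. It yields, for the minimal solution $u$ with datum $u_0\in\mathcal M_{\widetilde p}(\ren)$,
\[
u^\#(|x|,t)\prec v(|x|,t)\qquad\text{for all }t>0,
\]
where $v$ is the minimal solution of \eqref{symmMarc}, i.e. the one with radial singular datum $v(x,0)=\|u_0\|_{\mathcal M_{\widetilde p}}\,|x|^{-\sigma/(1-m)}$.

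Next I would calibrate the free parameter $T$ in the explicit solution $U(\cdot,\cdot;T)$ of \eqref{solvanish} so that its initial trace coincides with $v(\cdot,0)$. Since $U(x,0;T)=C_1\,T^{1/(1-m)}|x|^{-\sigma/(1-m)}$ with $C_1=[(1-m)\kappa(m\sigma/(1-m))]^{1/(1-m)}$, the matching condition $C_1\,T^{1/(1-m)}=\|u_0\|_{\mathcal M_{\widetilde p}}$ forces precisely
\[
T=\bigl(\|u_0\|_{\mathcal M_{\widetilde p}}/C_1\bigr)^{1-m}=d(\sigma,m)\,\|u_0\|_{\mathcal M_{\widetilde p}}^{1-m},
\]
because $C_1^{-(1-m)}=\bigl((1-m)\kappa(m\sigma/(1-m))\bigr)^{-1}=d(\sigma,m)$. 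With this choice, $U(\cdot,\cdot;T)$ is a very weak solution of \eqref{symmMarc} carrying the same datum as the minimal solution $v$; since $v$ is minimal, the comparison/uniqueness statement of \cite[Theorem~3.2]{BV2012} (used exactly as in Step~2 of the proof of Theorem~\ref{Marthe}) gives $v(|x|,t)\le U(|x|,t;T)$ for every $t>0$. Both functions being radially nonincreasing, this pointwise bound upgrades to $v(\cdot,t)\prec U(\cdot,t;T)$, and transitivity of $\prec$ yields $u^\#(|x|,t)\prec U(|x|,t;T)$ for all $t>0$.

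From here the two conclusions follow. Evaluating at $t=T$ gives $u^\#(\cdot,T)\prec U(\cdot,T;T)\equiv 0$, and since $u\ge0$ this forces $u(\cdot,T)\equiv0$; hence the solution is extinct by time $T$ and $T(u_0)\le T=d(\sigma,m)\|u_0\|_{\mathcal M_{\widetilde p}}^{1-m}$, which is \eqref{extinctime}. For $0<t<T$ the function $U(\cdot,t;T)=C_1(T-t)^{1/(1-m)}|x|^{-\sigma/(1-m)}$ is a constant multiple of $|x|^{-N/\widetilde p}$ and hence lies in $\mathcal M_{\widetilde p}(\ren)$; moreover, for radially nonincreasing functions $f\prec g$ implies $\|f\|_{\mathcal M_{\widetilde p}}\le\|g\|_{\mathcal M_{\widetilde p}}$ (balls maximise the integral over sets of a given measure, by the rearrangement inequality), so from $u^\#(\cdot,t)\prec U(\cdot,t;T)$ we obtain $u^\#(\cdot,t)\in\mathcal M_{\widetilde p}(\ren)$, hence $u(\cdot,t)\in\mathcal M_{\widetilde p}(\ren)$ by rearrangement invariance of the Marcinkiewicz norm, together with $u^\#(t)\prec U(t;T)$ as asserted.

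The step I expect to be the main obstacle is the inequality $v\le U(\cdot,\cdot;T)$: one must know that the explicit \emph{very weak} solution dominates the minimal \emph{very weak} solution carrying the same singular datum. This does not follow from a pointwise comparison principle for bounded data directly; it has to be obtained within the class of very weak solutions of \cite{BV2012}, or via the approximation-and-monotone-limit device (approximate $v(\cdot,0)=U(\cdot,0;T)$ from below by $L^1\cap L^\infty$ data, compare the resulting classical solutions with $U(\cdot,\cdot;T)$, and pass to the monotone limit), which is precisely the mechanism already exploited in the proof of Theorem~\ref{Marthe}. Everything else is bookkeeping of exponents and the elementary properties of the order relation $\prec$ collected in Section~2.
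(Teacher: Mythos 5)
Your proposal is correct and follows essentially the same route as the paper: calibrate $T$ so that the explicit extinction solution \eqref{solvanish} takes the initial datum $\|u_{0}\|_{\mathcal{M}_{\widetilde{p}}}|x|^{-\sigma/(1-m)}$, invoke Theorem \ref{Marthe}, and conclude extinction by time $T$. In fact you are somewhat more careful than the paper, which tacitly identifies $U(\cdot,\cdot;T)$ with the minimal solution of \eqref{symmMarc}; your explicit treatment of the inequality $v\le U(\cdot,\cdot;T)$ via minimality/approximation, and of the final assertions $u(t)\in\mathcal{M}_{\widetilde{p}}(\ren)$ and $u^{\#}(t)\prec U(t;T)$, fills in details the paper leaves implicit.
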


\begin{proof}
Set
\[
\rho=\left[(1-m)\kappa\left(\frac{m\sigma}{1-m}\right)\right]^{\frac{1}{1-m}}
\]
and
\[
T=\left(\frac{\|u_{0}\|_{M_{\widetilde{p}}}}{\rho}\right)^{1-m}
\]
and consider the solution \eqref{solvanish} vanishing at time $T$. Then the solution $U(x,t;T)$ gets initial data
\[
\|u_{0}\|_{M_{\widetilde{p}}}\,|x|^{-\frac{\sigma}{1-m}}.
\]
Therefore, by Theorem \ref{Marthe}
\[
U(\cdot,t;T)\succ u^{\#}(\cdot,t)\,,
\]
thus for all $t\geq T$,
\[
\|u(\cdot,t)\|_{\infty}\leq \|U(\cdot,t;T)\|_{\infty}=0.
\]
Recall now that the vanishing time of $u$ is  $ T(u_{0})=\inf\left\{t\geq 0:\|u(\cdot,t)\|_{\infty}=0\right\}$.
\end{proof}

\begin{remark}[Extinction time for fractional equations with general nonlinearity] \emph{The comparison of concentrations and the estimate of the extinction
time are still valid, and represent a new result, for the solutions of an equation of the form
\eqref{gennonl} when $A$ is not a power function, but $A'(u)\ge au^{m-1}$ for some $a>0$ and all $u>0$, where $m\in (0,m_c)$. Indeed, by \cite{pqrv4} we get the existence of a unique weak solution $u$ to \eqref{gennonl} with data $u_{0}\in L^{\widetilde{p}}(\ren)$. Then arguing as in Remark \ref{extgenA} we find the estimate \eqref{estimgenA}, thus by Proposition \ref{extinctiontimetheo} we find that the solution $u$ vanishes with finite time
\begin{equation}
T_{1}(u_{0})\le \frac{m}{a}d(\sigma,m)\|u_{0}\|_{\mathcal{M}_{\widetilde{p}}(\ren)}^{1-m}\,.
\end{equation}
where the end of the expression is taken from \eqref{extinctime}.}

\end{remark}


\section{Extensions and open problems}

\noindent $\bullet$  {\bf More general operators.} The above estimates could be extended to the solutions of more general versions of the fractional Laplacian operator, in the same way that the standard symmetrization applies to elliptic equations with coefficients. We will not give further details since it would be convenient to work out the consequences for the existence theory, and this maybe deserves a proper space.

\noindent $\bullet$  {\bf Open problem.} We do not know how to perform the parabolic comparison in the case of more general $A$, which is not assumed to be concave or comparable to a concave function. In particular, we do not know how to obtain best constants for the FPME with $m>1$. Are they still given by
the constant of the Barenblatt solution? Any information on this issue would be welcome.

\


\noindent {\large\bf Acknowledgments}

\noindent Both authors partially supported by the Spanish Project MTM2011-24696. We thank Matteo Bonforte for his interest in this work and some suggestions.

\medskip

{\small


}

\

\noindent { \sc Addresses}

\noindent Juan Luis Vázquez, Dpto. de Matem\'aticas, Univ. Aut\'onoma de Madrid, \\
28049 Madrid, Spain. \quad   E-mail: {\tt juanluis.vazquez@uam.es}

\noindent Bruno Volzone, Dipartimento di Ingegneria,\\ Università degli Studi di Napoli ``Parthenope'', 80143 Napoli, Italy. \\E-mail: {\tt bruno.volzone@uniparthenope.it}

\end{document}